\newtheorem{theoreme}{Theorem}[section] %
\newtheorem{proposition}[theoreme]{Proposition} %
\newtheorem{lemme}[theoreme]{Lemma} %
\newtheorem{definition}{Definition}[section] %
\newtheorem{remark}[theoreme]{Remark} %
\newtheorem{exemple}[theoreme]{Exemple} %
\newcommand{\rad}{\rho_1}
\begin{document}

\title[An anisotropic inhomogeneous ubiquity theorem]
  {An anisotropic inhomogeneous ubiquity theorem}

\author[E. Daviaud]{\'Edouard DAVIAUD\\
 Universit\'e Paris-Est, LAMA (UMR 8050)\addressbreak UPEMLV, UPEC, CNRS, F-94010, Cr\'eteil, France}

\volume{1}
\pubyear{2021}
\setcounter{page}{1}
\receivedline{Received \textup{12} October \textup{2021;}
              revised \textup{...} ...l \textup{...}}
\maketitle

\section{Introduction}
Let $(B_n)_{n\in\mathbb{N}}$ be a sequence of balls in $\mathbb{R}^{d}$, $d\ge 1$, endowed with any norm $\|\cdot\|$. Starting from some simple geometric property of the set of points falling in infinitely many of the $B_n$'s, i.e. $\limsup_{n\rightarrow+\infty}B_n$,  finding estimates for the Hausdorff dimension of the limsup sets of shrunk versions of $(B_n)_{n\in\mathbb{N}}$ into smaller sets  is a natural and old question, which has been studied in depth. The first result goes back to Jarnik and Besicovitch, who proved that for every $\tau\geq 1$, the dimension of the set $\bigcap_{k\in\mathbb{N}^{*}}\bigcup_{\substack{q\in \mathbb{N}^{*}\\ q\geq k}}\bigcup_{p\in\mathbb{Z}}B({p}/{q},1/q^{2\tau})$ has Hausdorff dimension~$\frac{1}{\tau}$ (although the result was not stated  in terms of sequence of balls such that the limsup has full Lebesgue measure, the proof uses explicitly this geometric fact).


It was first established by Jaffard \cite{Ja} that if $\limsup_{n\rightarrow +\infty}B_n$ has full Lebesgue measure, then for every $\tau\geq 1$,  the Hausdorff dimension  of $\limsup_{n\rightarrow +\infty}B_n ^{\tau}$ (where for a closed ball $B=B(x,r)$ of center $x$ and radius $r\geq 0$, the ball $B^{\tau}$ is defined by $B^{\tau}=B(x,r ^{\tau})$) is bounded by below as follows:
  $$\dim_{H}(\limsup_{n\rightarrow +\infty}B_n ^{\tau})\geq\frac{d}{\tau}.$$
Thanks to this result, Jaffard was able to compute the multifractal spectrum of certain lacunary wavelet series \cite{Ja}. This so-called ubiquity result was generalized by Dodson $\&$ al. in \cite{Dodson}, where the notion of ubiquitous system is introduced, and further refined by Beresnevitch and Velani in \cite{BV}. Given a metric space $X$ and an Ahlfors regular  Radon measure $\mu$ (i.e.   there exists $\alpha\geq 0$ such that for every ball $B$ of radius $r$ small enough, one has $C^{-1}r^{\alpha}\leq\mu(B)\leq Cr^{\alpha}$  for some uniform constant $C>0$), Bersenevich and Velani prove that as soon as $\limsup_{n\rightarrow+\infty}B_n$ has full $\mu$-measure, then one has $\mathcal{H}^{\frac{\alpha}{\tau}}(\limsup_{n\rightarrow+\infty} B_n ^{\tau})=+\infty$, where $\mathcal{H}^{\frac{\alpha}{\tau}}$ denotes the Hausdorff measure of dimension $\frac{\alpha}{\tau}$. Different approaches, using various distribution properties of the centres of the balls $B_n$, were also developed intensively (see the monographs \cite{Bugeaud} and \cite{Durand}).  

%
The inhomogeneous case, i.e when the information about $\limsup_{n\rightarrow +\infty}B_n$ is not given by the Lebesgue measure, or an Ahlfors regular one as in \cite{Ja} and \cite{BV},  but rather by a multifractal measure possessing scale invariance like properties, has been studied by Barral and Seuret in \cite{BSCMP,BS}. For instance, they proved that for a quasi-Bernoulli probability measure $\mu$ (see Definition~\ref{QB}), if $\mu(\limsup_{n\rightarrow +\infty}B_n)=1$, then the same type of result stands. Namely, if ${\dim}(\mu)$ denotes the dimension of the measure $\mu$ (see Definition~\ref{exa} and Proposition~\ref{QBexa} below), then one has  
\begin{equation}\label{ubiBS}
\dim_{H}(\limsup_{n\rightarrow +\infty}B_n ^{\tau})\geq \frac{{\dim} (\mu)}{\tau}.
\end{equation}
This type of result has many applications to the multifractal analysis of functions, measures and capacities (see, e.g., \cite{BSCMP,BSJIMJ,BS2020}). 

Recently, in an other direction,  Wang and Wu, working with the $\|  \cdot   \|_{\infty}$ norm, dealt with the anisotropic case, when the balls (which are Euclidean cubes) are shrunk into   thin rectangles and when the reference measure is the Lebesgue measure (or an Alhfors regular one) in \cite{RE}. More precisely, for any sequence of balls $(B_n=\prod _{i=1}^{d}[x_n^i,x_n^i+r_n])_{n\in\mathbb{N}}$ in $(\mathbb R^d,\| \ \|_{\infty})$, given  $1\leq \tau_1\leq \tau_2 \leq ...\leq \tau_d$, these authors consider the collection consisting of the $B_n$'s shrunk into rectangles defined by 
\begin{equation}
\label{defRn}
R_n=\prod _{i=1}^{d}[x_n^i,x_n^i+r_n ^{\tau_i}], \ \ \ \mbox{for every } n\in\mathbb{N}.
\end{equation} 
They proved that if $\limsup_{n\rightarrow +\infty}B_n$ has full Lebesgue measure, then
\begin{equation}
\label{rectleb}\dim_{H}(\limsup_{n\rightarrow+\infty}R_n)\geq\min_{1\leq i\leq d}\left(\frac{d+\sum_{1\leq j\leq i}\tau_i-\tau_j}{\tau_i}\right).\end{equation}
Later, based on the remark that the technique used in \cite{RE} carries a certain form of genericity, Rams and Koivusalo were able to deduce a general principle of computation for balls shrunk into sets of arbitrary (open) shapes in \cite{KR}.

\medskip

The present paper aims at shedding some light on how anistropic settings can be handled within the inhomogeneous case. As a consequence of our main result, following the previous notations, we obtain that if  $\limsup_{n\rightarrow +\infty}B_n$ has full measure for a quasi-Bernoulli measure $\mu$ fully supported on $[0,1]^d$, then 
\begin{equation*}
\dim_{H}(\limsup_{n\rightarrow+\infty}R_n)\geq\min_{1\leq i\leq d}\left (\frac{\dim (\mu)+\sum_{1\leq j\leq i}\tau_i-\tau_j}{\tau_i}\right ).
\end{equation*}
\section{Preliminaries and statement of the main result}

\subsection{Some notations.}

The space $\mathbb{R}^{d}$ is endowed with the infinity norm $\|\,\|_\infty$. 

For $x\in\mathbb{R}^{d}$ and $r\ge 0$,  $B(x,r)$ stands for the closed ball of center $x$ and radius~$r$, and for $t\geq 0$ and $\tau\in\mathbb{R}$, setting $B=B(x,r)$,  $t B$ and  $B^{\tau}$ denote the balls $B(x,t r)$ and $B(x ,r^{\tau})$ respectively.

If $E\subset \mathbb{R}^d$, $\mathring{E}$ and $\partial E$ denote its interior and its boundary, $|E|$ its diameter, and if $E$ is a Borel set, $\mathcal{B}(E)$ denotes the trace of the Borel $\sigma$-algebra $\mathcal{B}(\mathbb{R}^d)$ on $E$. Also, $\dim_H (E)$ and $\dim_{P}(E)$ respectively denote the Hausdorff dimension and the packing dimension of $E$  (see, e.g., \cite{F} for the definitions). 

$\mathcal{L}^d$ stands for the Lebesgue measure on $(\mathbb{R}^d,\mathcal{B}(\mathbb{R}^d))$, and $\mathcal{P}([0,1]^d)$ stands for the set of Borel probability measures on $([0,1]^d,\mathcal B([0,1]^d))$. For $\mu \in\mathcal{P}([0,1]^d)$, one denotes by $\supp(\mu)$ the topological support of $\mu$.


 $\mathcal{M}_{d}(\mathbb{R})$ and $\mathcal{O}_{d}(\mathbb{R})$ are   the space of $d\times d$ real matrices and  the group of orthogonal matrices of $\mathcal{M}_{d}(\mathbb{R})$.
 
If $r_1,\ldots,r_d$ are $d$ real numbers,   \mbox{diag}$(r_1,...,r_d)$ stands for the diagonal matrix  $A \in\mathcal{M}_d (\mathbb{R})$ such that $A_{i,j}=r_i\delta_{i,j}$ for all $1\leq i, j\leq d$, where $\delta_{i,j}=1$ if $i=j$ and $\delta_{i,j}=0$ otherwise.

Given $\mu\in\mathcal{P}([0,1]^d)$ and $T:[0,1]^d\mapsto [0,1]^d$  a measurable function, one defines 
$T\mu=\mu\circ T^{-1}.$

For $p\in\mathbb{N}$,  $\mathcal{D}_p$ stands for  the set of closed dyadic subcubes of $[0,1]^d$ of generation $p$, i.e $$\mathcal{D}_{p}=\left\{\prod_{i=1}^{d}[k_i 2^{-p}, (k_i +1)2^{-p}] \  : \ \forall1\leq i\leq d , \ 0\leq k_{i}\leq 2^{p}-1\right\}.$$
For $D\in\mathcal D_p$, we also denote $p$ by $p(D)$. Observe  that $D\in\mathcal{D}_{p(D)}.$

\subsection{Some definitions and recalls}
\begin{definition}
Let $\mu\in\mathcal{P}([0,1]^d)$. For $x\in \supp(\mu)$, the local lower and upper dimensions of $\mu$ at $x$ are $$\underline\dim_{{\rm loc}}(\mu,x)=\liminf_{r\rightarrow 0^{+}}\frac{\log(\mu(B(x,r)))}{\log(r)}$$ and $$\overline\dim_{{\rm loc}}(\mu,x)=\limsup_{r\rightarrow 0^{+}}\frac{\log (\mu(B(x,r)))}{\log(r)}.$$ \medskip
One also sets  $\underline{\dim}_H (\mu)={\rm essinf}_{\mu}(\underline\dim_{{\rm loc}}(\mu,x))$ and $\overline{\dim}_P (\mu)={\rm esssup}_{\mu}(\overline\dim_{{\rm loc}}(\mu,x)).$

 \medskip

\end{definition}

It is known that (see \cite{F} for instance)
$$\underline{\dim}_H (\mu)=\inf\{\dim_{H}(E): \, E\in\mathcal{B}([0,1]^d), \ \mu(E)>0\}
$$ 
and 
$$
\overline{\dim}_P (\mu)=\inf\{\dim_{P}(E):\, E\in\mathcal{B}([0,1]^d), \ \mu(E)=1\}.
$$

\begin{definition}
\label{exa}
A measure $\mu\in\mathcal{P}([0,1]^d)$ is said to be exact dimensional if there exists $\alpha\in\mathbb R_+$ such that for $\mu$-almost all $x\in[0,1]^d$, one has  $\overline\dim_{{\rm loc}}(\mu,x)=\underline\dim_{{\rm loc}}(\mu,x)=\alpha$, i.e. $\underline{\dim}_H (\mu)=\overline{\dim}_P (\mu)=\alpha$. In this case $\alpha$ is simply denoted by $\dim (\mu)$. 
\end{definition}
%

We now define quasi-Bernoulli measures associated with the dyadic cubes (our main results easily extend to the case of $b$-adic cubes).
\begin{definition}
\label{QB}
Let $\mu\in\mathcal{P}([0,1]^d)$. For $D\in\mathcal{B}([0,1]^d)$ such that $\mu(D)>0$, define  $$\mu_D=\frac{\mu_{\vert_D}}{\mu(D)}.$$

When $D$ is a closed dyadic subcube of $[0,1]^d$,   $T_{D}:D\rightarrow [0,1]^d$ stands for the canonical affine mapping which sends $D$ onto $[0,1]^d$. In addition, when $\mu(D)>0$ one defines 
 $$
 \mu^{D}=T_{D}\mu_D\in \mathcal{P}([0,1]^d).$$

The measure $\mu$ is said to be  quasi-Bernoulli when there exists a constant $C_{\mu}\geq 1$ such that for every $p\in\mathbb{N}$ and every $D\in\mathcal{D}_{p}$ with $\mu(D)>0$, one has
\begin{equation}
\label{qua}
\frac{1}{C_{\mu}}\mu\leq\mu^{D}\leq C_{\mu}\mu.
\end{equation} 
\end{definition}
The measure $\mu_D$ is the renormalized restriction of $\mu$ to $D$ and $\mu^{D}$ is the rescaled version of $\mu_D$ on the unit cube.
\medskip
\begin{exemple}
Define $\Lambda=\left\{0,1 \right\}$, $\Sigma =\Lambda^{\mathbb{N}}$, $\sigma$ be the shift operator on  $\Sigma$, and endow $\Sigma$ with the standard ultra-metric distance. Let $\pi$ the canonical projection of $\Sigma$ onto $[0,1]$. For any  H\"older potential $\phi$ on $\Sigma$, denote by $\nu_{\phi}$ the unique  equilibrium state associated with $\phi$ on $\Sigma$ (see \cite{Bowen}). Then the measure $\mu_{\phi} =\nu_{\phi} \circ \pi ^{-1}$ is quasi-Bernoulli, and $\nu_\phi$ is also called a Gibbs measure associated with $\varphi$. This follows from the fact that there exists a number $P(\varphi)$, the topological pressure of $\varphi$, and $C\geq 1$, such that for all $x\in\Sigma$, for all $n\in\mathbb N$:
$$
C^{-1}\le \frac{\nu_\phi\left(\{y=(y_i)_{i=1}^\infty\in\Sigma: \, y_i=x_i \text{ for all $1\le i\le n$}\}\right)}{e^{-nP(\varphi)+\sum_{k=0}^{n-1}\varphi(\sigma^k x)}}\le C.
$$
Note that there exist quasi-Bernoulli measures obtained as projections of  measures of Gibbs type  associated to potentials $\phi$ with much weaker regularity properties (see \cite{Walters,BKM}).
\end{exemple}

\begin{remark}\label{nomassonboundary}
It is easily seen that a quasi-Bernoulli measure $\mu$, if not supported on an affine hyperplane, is such that  $\mu(\partial [0,1]^d)=0$. For otherwise its orthogonal projection onto at least one of the sets $\{0\}^i\times[0,1]\times\{0\}^{d-i-1}$, which is quasi-Bernoulli as well, would have an atom at $(0,\ldots,0)$ or $(\underbrace{0,\ldots,0}_i,1,\underbrace{0,\ldots,0}_{d-i-1})$. This should  imply that it is a Dirac mass, hence $\mu$ is supported on a hyperplane. This property will be used in the proof of our main result.
\end{remark}

Let us recall the following result.
\begin{proposition}[\cite{H}]\label{QBexa}
A quasi-Bernoulli probability measure is exact dimensional.
\end{proposition}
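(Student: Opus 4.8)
The plan is to move the problem onto the symbolic model of the dyadic grid, build from $\mu$ an equivalent shift‑invariant measure, prove that measure is ergodic --- the heart of the matter --- and then read off exact dimensionality from the Shannon--McMillan--Breiman theorem, the passage from dyadic cubes to Euclidean balls being a classical last step. Concretely, code $[0,1]^d$ by the full shift $\Sigma=\{0,\dots,2^d-1\}^{\mathbb N}$ so that length‑$p$ cylinders correspond to the cubes of $\mathcal D_p$; write $D_w$ for the cube coded by a finite word $w$, set $\widetilde\mu([w])=\mu(D_w)$, and note that this defines $\widetilde\mu\in\mathcal P(\Sigma)$ whose image under the coding map is $\mu$ (we discard the $\mu$‑negligible set of dyadic boundaries via Remark~\ref{nomassonboundary}, and may assume $\mu$ is not a Dirac mass --- the only atomic quasi‑Bernoulli measures, and trivially exact dimensional). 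Under this coding \eqref{qua} becomes the quasi‑multiplicativity
\[
C_\mu^{-1}\,\widetilde\mu([u])\,\widetilde\mu([v])\ \le\ \widetilde\mu([uv])\ \le\ C_\mu\,\widetilde\mu([u])\,\widetilde\mu([v])\qquad(u,v\ \text{finite words}),
\]
which, by uniqueness of a measure from its values on a semiring, extends to
\[
C_\mu^{-1}\,\widetilde\mu([w])\,\widetilde\mu(A)\ \le\ \widetilde\mu\bigl([w]\cap\sigma^{-|w|}A\bigr)\ \le\ C_\mu\,\widetilde\mu([w])\,\widetilde\mu(A)
\]
for every word $w$ and every Borel $A\subset\Sigma$, where $\sigma$ is the shift. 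Letting $\overline\mu$ be a weak‑$*$ limit point of $N^{-1}\sum_{n<N}\sigma^n_*\widetilde\mu$, the measure $\overline\mu$ is $\sigma$‑invariant, and since cylinders are clopen the above forces $C_\mu^{-1}\widetilde\mu\le\overline\mu\le C_\mu\widetilde\mu$ on cylinders, hence on all Borel sets; thus $\overline\mu$ and $\widetilde\mu$ are mutually absolutely continuous with densities bounded by $C_\mu$ both ways, and I will write $\overline\mu\sim\widetilde\mu$.

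\emph{Ergodicity of $\overline\mu$.} Let $A$ be $\sigma$‑invariant, so $A=\sigma^{-n}A$ for every $n$. For a cylinder $[u]$ and $n\ge|u|$ one writes $[u]\cap A$ as the disjoint union, over words $z$ of length $n-|u|$, of the sets $[uz]\cap\sigma^{-n}A$; applying the extended inequality termwise and summing, and using $\sum_z\widetilde\mu([uz])=\widetilde\mu([u])$, gives
\[
C_\mu^{-1}\,\widetilde\mu([u])\,\widetilde\mu(A)\ \le\ \widetilde\mu([u]\cap A)\ \le\ C_\mu\,\widetilde\mu([u])\,\widetilde\mu(A).
\]
Hence the martingale $x\mapsto\widetilde\mu([x|_p]\cap A)/\widetilde\mu([x|_p])$, which by Lévy's upward convergence theorem tends $\widetilde\mu$‑a.e.\ to $\mathbf 1_A$, takes \emph{all} its values in $[\,C_\mu^{-1}\widetilde\mu(A),\,C_\mu\widetilde\mu(A)\,]$, so $\mathbf 1_A(x)\in[\,C_\mu^{-1}\widetilde\mu(A),\,C_\mu\widetilde\mu(A)\,]$ for $\widetilde\mu$‑a.e.\ $x$. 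If $0<\widetilde\mu(A)<1$, evaluating at a $\widetilde\mu$‑typical point of $A^{c}$ gives $0\ge C_\mu^{-1}\widetilde\mu(A)>0$, which is absurd. Therefore $\widetilde\mu(A)\in\{0,1\}$ for every $\sigma$‑invariant $A$, and since $\overline\mu\sim\widetilde\mu$ the invariant measure $\overline\mu$ is ergodic.

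\emph{Conclusion.} Apply the Shannon--McMillan--Breiman theorem to the ergodic system $(\Sigma,\sigma,\overline\mu)$ with the generating partition into length‑one cylinders: $-p^{-1}\log\overline\mu([x|_p])\to h(\overline\mu)$ for $\overline\mu$‑a.e.\ $x$. As $\log\bigl(\overline\mu([x|_p])/\widetilde\mu([x|_p])\bigr)$ is bounded and $\overline\mu\sim\widetilde\mu$, the same limit holds $\widetilde\mu$‑a.e., i.e.\ back in $[0,1]^d$,
\[
\frac{\log\mu(D)}{\log|D|}\ \longrightarrow\ \alpha:=\frac{h(\overline\mu)}{\log 2}\quad(p\to\infty)\qquad\text{for $\mu$-a.e. }x,\ \ D\in\mathcal D_p,\ D\ni x.
\]
For arbitrary balls, the inclusion $D'\subset B(x,2^{-p})$ with $D'\in\mathcal D_{p+1}$, $D'\ni x$, immediately gives $\underline\dim_{\mathrm{loc}}(\mu,x)\le\alpha$ and $\overline\dim_{\mathrm{loc}}(\mu,x)\le\alpha$, while the reverse bound $\underline\dim_{\mathrm{loc}}(\mu,x)\ge\alpha$ follows from the classical comparison of balls with dyadic cubes, which is valid $\mu$‑a.e.\ because $\mu$ puts no mass on affine hyperplanes: by Remark~\ref{nomassonboundary} applied to the measures $\mu^{D}$, which are quasi‑Bernoulli with the same constant $C_\mu$, a $\mu$‑typical point lies deep inside its dyadic cube at a set of scales of full density, and the residual scales do not affect the $\liminf$. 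Hence $\underline\dim_{\mathrm{loc}}(\mu,x)=\overline\dim_{\mathrm{loc}}(\mu,x)=\alpha$ for $\mu$‑a.e.\ $x$, that is $\underline{\dim}_H(\mu)=\overline{\dim}_P(\mu)=\alpha$, so $\mu$ is exact dimensional.

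The genuine obstacle is the ergodicity step: quasi‑multiplicativity is much weaker than the approximate independence of a Bernoulli measure --- the error $C_\mu$ does not decay with word length --- so the usual mixing or entropy‑subadditivity proofs stall; the point is to spend this limited quasi‑independence on \emph{invariant} sets alone, where the exact identity $\sum_z\widetilde\mu([uz])=\widetilde\mu([u])$ turns it, via Lévy's theorem, into triviality of the invariant $\sigma$‑algebra. The cube‑to‑ball passage is routine but genuinely uses the no‑mass‑on‑hyperplanes property, since quasi‑Bernoulli measures need not be uniformly doubling.
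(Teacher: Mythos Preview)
The paper does not prove this proposition; it is quoted from Heurteaux~\cite{H}. Your argument follows essentially the strategy of that reference: pass to the symbolic model, replace $\widetilde\mu$ by an equivalent $\sigma$-invariant measure $\overline\mu$, prove $\overline\mu$ ergodic via quasi-multiplicativity and a martingale (L\'evy) argument, then read off the dyadic local dimension from Shannon--McMillan--Breiman. That part is correct and well executed.

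The genuine gap is the last step, the inequality $\underline\dim_{\rm loc}(\mu,x)\ge\alpha$. Your claim that ``the residual scales do not affect the $\liminf$'' is unjustified as stated: a zero-density set of bad scales is still infinite and can pull the $\liminf$ down, and quasi-Bernoulli measures are \emph{not} doubling in general --- for instance the adjacent $p$-cubes $[1/2-2^{-p},1/2]$ and $[1/2,1/2+2^{-p}]$ can have mass ratio exponential in $p$ --- so at a scale where $x$ lies close to $\partial D_p(x)$ the ball $B(x,2^{-p})$ may pick up a neighbouring cube of uncontrolled mass. What does work is a quantitative density argument that reuses the ergodicity you already proved: given $\eta>0$, choose $\delta_0>0$ so that the $\delta_0$-neighbourhood of $\partial[0,1]^d$ has $\overline\mu$-mass $<\eta$; by Birkhoff for $(\sigma,\overline\mu)$ the set of ``$\delta_0$-deep'' scales $p$ (those with $B(x,\delta_0 2^{-p})\subset D_p(x)$, i.e.\ $\sigma^p\omega$ outside that neighbourhood) has density $>1-\eta$ for $\mu$-a.e.\ $x$. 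Hence for every large $p$ there is a deep scale $p'$ with $(1-2\eta)p\le p'\le p-\log_2(1/\delta_0)$, so $B(x,2^{-p})\subset B(x,\delta_0 2^{-p'})\subset D_{p'}(x)$, whence $\mu(B(x,2^{-p}))\le\mu(D_{p'}(x))\le 2^{-p'(\alpha-\varepsilon)}$ and $\frac{\log\mu(B(x,2^{-p}))}{-p\log 2}\ge(1-2\eta)(\alpha-\varepsilon)$. Letting $\eta,\varepsilon\downarrow 0$ gives $\underline\dim_{\rm loc}(\mu,x)\ge\alpha$. The outline is right, but this step must be argued, not asserted.
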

\subsection{Main statement}

Our main result is the following. Recall our notations \eqref{defRn} for $(R_n)_{n\in \mathbb{N}}$.

\begin{theoreme}
\label{lowerbound}
Let  $\mu \in\mathcal{P}([0,1]^d)$ be a quasi-Bernoulli probability measure fully supported on $[0,1]^d$. Let $(B_n := B(x_n ,r_n))_{n\in\mathbb{N}}$ be a sequence of balls in $[0,1]^d$ such that $\lim_{n\rightarrow+\infty}r_n =0$ and $\mu(\limsup_{n\rightarrow+\infty}B_n)=1.$

Let $1\leq \tau_1\leq ...\leq \tau_d$ be $d$ real numbers, $\boldsymbol{\tau}=(\tau_1,\ldots,\tau_d)$ and $(O_n)_{n\in\mathbb{N}}\in\mathcal{O}_{d}(\mathbb{R})^{\mathbb{N}}$ be a sequence of orthogonal matrices. For $n\in\mathbb N$, set 
\begin{equation}
\label{rn}
R_n=x_n+O_n \widetilde{R}_n, \text{ where }\widetilde{R}_n={\rm diag}(r_n^{\tau_1},...,r_n^{\tau_d})\cdot [0,1]^d
\end{equation}
and
\begin{equation}
\label{smutau}
s(\mu,\boldsymbol{\tau})=\min_{1\leq k\leq d}\left( \frac{\underline{\dim}_H (\mu)+\sum_{1\leq j\leq k}\tau_k-\tau_j}{\tau_k}\right).
\end{equation}
One has 
\begin{equation}
\label{min}
\dim_H(\limsup_{n\rightarrow +\infty}R_n)\geq s(\mu,\boldsymbol{\tau}).
\end{equation}
\end{theoreme}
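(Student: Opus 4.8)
The plan is to exhibit a Cantor-type subset $K\subset\limsup_{n}R_{n}$ carrying a probability measure $m$, and to conclude via the mass distribution principle: if $m(B(y,\rho))\le C\rho^{s}$ for all $y$ and all small $\rho$, then $\dim_{H}K\ge s$. I fix $\varepsilon>0$ and write $\alpha=\dim(\mu)$ — legitimate since, by Proposition~\ref{QBexa}, a quasi-Bernoulli measure is exact dimensional, so $\alpha=\underline{\dim}_{H}(\mu)$. The goal is to build $K$ and $m$ so that $\dim_{H}K\ge s(\mu,\boldsymbol{\tau})-\eta(\varepsilon)$ with $\eta(\varepsilon)\to 0$; as $K\subset\limsup_{n}R_{n}$ in any case, letting $\varepsilon\to 0$ finishes the proof. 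The two hypotheses on $\mu$ play complementary roles: exact dimensionality (through Egorov) yields, after finitely many steps, the two-sided control $\rho^{\alpha+\varepsilon}\lesssim\mu(B(x,\rho))\lesssim\rho^{\alpha-\varepsilon}$ on a set of $\mu$-measure close to $1$; and the quasi-Bernoulli property (Definition~\ref{QB}) is used repeatedly to \emph{rescale} — inside any dyadic cube $D$ one has $\mu^{D}\asymp\mu$, so both the ubiquity hypothesis $\mu(\limsup_{n}B_{n})=1$ and the Egorov control can be re-imported at every scale and location. I also use that $\mu$ charges no dyadic hyperplane (Remark~\ref{nomassonboundary}) and that $\mu$ is doubling, which makes the Vitali-type extractions below legitimate.

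The construction proceeds by super-generations, each consisting of one \emph{ubiquity step} followed by one \emph{tiling step}. Given an (essentially dyadic) cube $Q$ of side $\ell$ with mass $m(Q)$: since $\mu^{Q}\asymp\mu$ and $\mu(\limsup_{n}B_{n})=1$, the balls $B_{n}\subset Q$ with $n$ and with $1/r_{n}$ as large as desired still cover $\mu$-almost all of $Q$, so a covering argument extracts a finite pairwise disjoint subfamily $B_{n_{1}},\dots,B_{n_{N}}\subset Q$ with comparable radii $r_{n_{i}}\approx\delta$, with $\sum_{i}\mu(B_{n_{i}})\gtrsim\mu(Q)$, and such that $\mu(B_{n_{i}})\lesssim r_{n_{i}}^{\,\alpha-\varepsilon}$ and $B_{n_{i}}$ carries (for $\mu$) a proportion $\ge 1-\varepsilon$ of points that are good at the relevant scales — the key elementary remark being that a small ball containing a good point automatically has these two properties. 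One takes $\delta$ \emph{extremely} small relative to $\ell$, say $\delta\approx\ell^{\,t}$ with $t=t(\varepsilon)$ large; this is what makes the final exponent sharp. Each selected $B_{n_{i}}$ is then replaced by its rectangle $R_{n_{i}}$ (contained in a fixed dilate of $B_{n_{i}}$, hence in $Q$, the $R_{n_{i}}$ being pairwise disjoint), and the box $R_{n_{i}}$, of side-lengths $r_{n_{i}}^{\tau_{1}}\ge\cdots\ge r_{n_{i}}^{\tau_{d}}$, is partitioned into its $\approx r_{n_{i}}^{\,\sum_{j}(\tau_{j}-\tau_{d})}$ sub-cubes of side $r_{n_{i}}^{\tau_{d}}$, of which one keeps those on which $\mu$ is well behaved; these form the cubes of super-generation $k+1$. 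The mass descends accordingly: $m(Q)$ is split among the $B_{n_{i}}$ proportionally to $\mu(B_{n_{i}})$, so $R_{n_{i}}$ inherits $m(Q)\mu(B_{n_{i}})/\mu(Q)$, then this is split among the kept sub-cubes of $R_{n_{i}}$ proportionally to $\mu$. Iterating, $K:=\bigcap_{k}\bigcup\{Q:Q\text{ super-generation }k\}$ is compact; since every super-generation-$(k{+}1)$ cube lies inside some $R_{n}$ with $n$ arbitrarily large, $K\subset\limsup_{n}R_{n}$, and the weak-$*$ limit of the finitely supported approximants is a probability measure $m$ on $K$.

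For the Frostman estimate, fix $y\in K$ and small $\rho$, locate the objects $Q\supset R_{n}\ni y$ with which $\rho$ is associated, and distinguish the position of $\rho$ relative to the scales $\ell,\ r_{n},\ r_{n}^{\tau_{1}}\ge\cdots\ge r_{n}^{\tau_{d}}$ of $Q$, $B_{n}$, $R_{n}$: when $\rho\in[r_{n},\ell]$ one is at the ``ball scale'' inside $Q$ (and the subcube scales of super-generation $k{+}1$ are handled by the same formula one step deeper), while for $\rho\in[r_{n}^{\tau_{j+1}},r_{n}^{\tau_{j}}]$ one is in the $j$-th anisotropy regime, where the mass $m(B(y,\rho))$ picked up from $R_{n}$ is controlled by $m(R_{n})$ times the $\mu$-mass of a box of shape $\rho^{\,j}\times r_{n}^{\tau_{j+1}}\times\cdots\times r_{n}^{\tau_{d}}$, renormalised by the $\mu$-mass carried by the kept sub-cubes. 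Feeding in how $m(R_{n})$ was produced together with the Egorov bounds $\mu(B_{n_{i}})\lesssim r_{n_{i}}^{\alpha-\varepsilon}$ and $\mu(B(y,\rho))\lesssim\rho^{\alpha-\varepsilon}$, the $j$-th regime forces $m(B(y,\rho))\lesssim\rho^{\,\beta_{j}}$ with $\beta_{j}=\beta_{j}(t,\varepsilon)\to\dfrac{\alpha+\sum_{1\le i\le j}(\tau_{j}-\tau_{i})}{\tau_{j}}$ as $t\to\infty$, $\varepsilon\to 0$; the worst regime returns precisely $s(\mu,\boldsymbol{\tau})$ from \eqref{smutau}. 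All estimates being ``up to $\rho^{\pm\varepsilon}$'', the multiplicative losses through the infinitely many generations are absorbed, and the mass distribution principle gives $\dim_{H}K\ge s(\mu,\boldsymbol{\tau})-\eta(\varepsilon)$.

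The crux is the tiling step and the box estimates it feeds. Because $\mu$ is not Ahlfors regular — a fully supported measure with $\dim(\mu)<d$ is genuinely multifractal — a thin rectangle $R_{n}$ may carry only a minute fraction of the $\mu$-mass of its bounding cube, and most of its side-$r_{n}^{\tau_{d}}$ sub-cubes may be $\mu$-negligible; one must nonetheless isolate a sub-family of good sub-cubes carrying a definite proportion of $\mu(R_{n})$ and, in the Frostman step, bound $\mu$ of sub-boxes of $R_{n}$ in terms of $\mu(R_{n})$ and the box's shape. This is exactly where quasi-Bernoulli is indispensable rather than mere exact dimensionality: it allows one to rescale $\mu$ to the bounding cube of $R_{n}$ (and to its sub-cubes), import the exact-dimensional good-set information at that scale, and thereby run the counting uniformly across scales and positions. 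By contrast the rotations $O_{n}$ cost nothing: the ubiquity hypothesis involves only the round balls $B_{n}$, and intersecting an axis-parallel ball with a rotated box is the same problem as intersecting a rotated ball with an axis-parallel box, to which both the covering estimates and the norm- and rotation-invariant Hausdorff dimension are insensitive.
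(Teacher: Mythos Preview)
Your overall strategy---Cantor set plus mass distribution, with alternating ubiquity and tiling steps, Besicovitch extractions, and quasi-Bernoulli rescaling inside dyadic cubes---matches the paper's. But there is a genuine gap in the tiling step, precisely at the point you yourself flag as ``the crux''.

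You propose to split the mass of $R_{n}$ among its side-$r_{n}^{\tau_{d}}$ sub-cubes \emph{proportionally to $\mu$}, keeping only those ``on which $\mu$ is well behaved'', and in the Frostman step you want to bound the $\mu$-mass of a sub-box of $R_{n}$ in terms of $\mu(R_{n})$. Neither of these is available. The quasi-Bernoulli inequality \eqref{qua} is tied to axis-parallel dyadic cubes and gives no control of $\mu$ on a rotated rectangle $R_{n}$ or on its sub-boxes; even when $O_{n}=I_{d}$, a thin rectangle is not a dyadic cube and $\mu_{|R_{n}}$ may be wildly non-uniform. There is no reason the number of ``good'' sub-cubes inside $R_{n}$ should be comparable to the full count $\asymp r_{n}^{\sum_{j}(\tau_{j}-\tau_{d})}$, nor that $\mu(B(y,\rho)\cap R_{n})/\mu(R_{n})$ admits the power-law upper bound your regime-by-regime computation needs. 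Your closing remark that rotations ``cost nothing'' is correct for the ubiquity/covering step (which sees only balls) but is false at the tiling step if one tries to read $\mu$ directly on $R_{n}$.

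The paper sidesteps this by distributing the mass of $R_{n}$ \emph{uniformly} among all axis-parallel dyadic sub-cubes $D\in\mathcal{C}(R_{n})$, setting $\eta(D)=\eta(R_{n})/\#\mathcal{C}(R_{n})$. This single change decouples the intermediate-scale Frostman estimate from $\mu$ entirely: for $r\in[r_{n}^{\tau_{d}},r_{n}]$ one simply \emph{counts} the sub-cubes meeting the ball and multiplies by the common mass, so that $\eta(B(y,r))\lesssim r_{n}^{\alpha-2\varepsilon}\cdot r_{n}^{-\sum_{i}(\tau_{i}-\tau_{d})}\cdot\#\{D\in\mathcal{C}(R_{n}):D\cap B(y,r)\neq\emptyset\}$, and the minimisation over the anisotropic regimes produces exactly $s(\mu,\boldsymbol{\tau})$. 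The measure $\mu$ is invoked again only \emph{inside} each dyadic $D$, where \eqref{qua} legitimately applies and re-imports both the ubiquity hypothesis and the local-dimension control. The missing idea, in one line: do not try to understand $\mu$ on the rotated rectangle---pass through it uniformly, and use $\mu$ only once you are back inside an axis-parallel dyadic cube.
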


\begin{remark}
(1) For convenience, in particular to follow the point of view adopted in \cite{RE}, the results are stated with $\mathbb{R}^d$ endowed with $\| \ \cdot \ \|_{\infty}$ and for balls shrunk into rectangles with one vertex equal to the center of the shrunk ball. However, we emphasize that, up to very slight modifications of the proof (essentially by adding constants at some places), they still hold for another norm and if   the balls are shrunk into rectangles or ellipsoids containing the center of the initial cube.\medskip 

(2) Given $\tau>1$, by taking $\tau_i=\tau$ for all $1\le i\le d$ and $O_n=I_d$ for all  $n\in\mathbb N$, , Theorem \ref{lowerbound} reduces to Barral-Seuret's theorem  \cite{BS} in the special case of quasi-Bernoulli measures, i.e~\eqref{ubiBS}.\medskip

(3) By taking $\mu=\mathcal{L}^d$ and $O_n=I_d$ for all  $n\in\mathbb N$, we recover the result established in~\cite{RE}, i.e.,  formula \eqref{rectleb}.
\end{remark}

\begin{remark}
The proof does not entirely use the exact dimensionality of $\mu$, the key property is the quasi-Bernoulli property \ref{qua}. However, the fact that $\underline{\dim}_H(\mu)=\overline{\dim}_H(\mu)$ can be used to prove $\dim_H(\limsup_{n\rightarrow +\infty}R_n)\leq s(\mu,\boldsymbol{\tau})$ under additional assumptions. The existence of   upper bounds for the Hausdorff dimension of limsup of sets (e.g. of rectangles)  included in balls $(B_n)_{n\geq N}$  will be  achieved in an independent paper,   in a   general setting.
\end{remark}

\section{Proof of theorem \ref{lowerbound}}
 Fix once and for all  the quasi-Bernoulli measure $\mu$, $1\leq \tau_1\leq ...\leq \tau_d$ and $\boldsymbol{\tau}=(\tau_1 ,...,\tau_d)$. Recall that  $\alpha=\dim_{H}(\mu)$ is the dimension of $\mu$.


The lower bound of Theorem \ref{lowerbound} is  obtained by constructing a Cantor set included in $\limsup_{n\rightarrow+\infty}R_n$, and  of dimension larger than or equal to $s(\mu,\boldsymbol{\tau})$. Before starting the construction,  two helpful results are recalled.
\begin{proposition}[Mass distribution principle, see \cite{F}]~ \ \label{MD}

 Let $A\in\mathcal{B}(\mathbb{R}^d)$ and $\mu\in\mathcal{M}(\mathbb{R})^d$. Suppose that there exists $C>0$ and $r>0$, $0\leq s\leq d$, such that for every ball of $\mathbb{R}^d$ $B=B(x,r^{\prime})$ with $r^{\prime}<r$, $\mu(B)\leq C(r^{\prime})^{s}$. Then $\mathcal{H}^{s}(A)\geq \frac{\mu(A)}{C}$.
 In particular, if $\mu(A)>0$ then $\dim_H (A)\geq s.$
\end{proposition}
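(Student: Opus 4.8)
\textbf{Proof proposal for the Mass Distribution Principle (Proposition~\ref{MD}).}

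The plan is to bound the $s$-dimensional Hausdorff (pre-)measure of $A$ from below by comparing an arbitrary cover of $A$ with the mass it carries. Fix $\delta \in (0, r)$ and let $(U_i)_{i\in I}$ be any countable cover of $A$ by sets with $|U_i| \le \delta$. We may assume each $U_i$ meets $A$ (otherwise discard it), and hence each nonempty $U_i$ is contained in a ball $B_i = B(x_i, |U_i|)$ with $x_i \in U_i$ and radius $r' := |U_i| \le \delta < r$. By the hypothesis applied to $B_i$, we get $\mu(U_i) \le \mu(B_i) \le C |U_i|^s$. Summing over $i$ and using that $(U_i)$ covers $A$,
\begin{equation*}
\mu(A) \le \mu\Big(\bigcup_{i \in I} U_i\Big) \le \sum_{i \in I} \mu(U_i) \le C \sum_{i \in I} |U_i|^s,
\end{equation*}
so that $\sum_{i} |U_i|^s \ge \mu(A)/C$. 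Since this holds for every admissible $\delta$-cover, taking the infimum over such covers yields $\mathcal{H}^s_\delta(A) \ge \mu(A)/C$, and then letting $\delta \to 0^+$ gives $\mathcal{H}^s(A) \ge \mu(A)/C$.

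For the final assertion: if $\mu(A) > 0$, then $\mathcal{H}^s(A) \ge \mu(A)/C > 0$, so $A$ cannot have Hausdorff dimension strictly less than $s$; hence $\dim_H(A) \ge s$.

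The only mild subtlety — not really an obstacle — is the passage from a general cover to a cover by balls: one must note that replacing $U_i$ by the circumscribed ball $B_i$ does not decrease the diameter in a way that breaks the radius bound, since we take the ball of radius exactly $|U_i|$ (centered at a point of $U_i \cap A$), which indeed contains $U_i$ and still has radius $< r$. One should also recall that the definition of $\mathcal{H}^s$ via arbitrary covers and via covers by balls are comparable up to a constant, but here we do not even need that comparison: we directly estimate an arbitrary cover. No measurability issue arises beyond $A \in \mathcal{B}(\mathbb{R}^d)$ and $\mu$ being a (Borel) measure, which are assumed.
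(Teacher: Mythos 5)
Your proof is correct and is precisely the standard argument for the mass distribution principle that the paper itself omits, referring instead to \cite{F}: enclose each element of a $\delta$-cover ($\delta<r$) in a ball of radius $|U_i|$ centered at a point of $U_i$, apply the hypothesis, sum, and let $\delta\to 0^+$. Nothing to add; the only micro-detail worth keeping in mind is that sets $U_i$ of zero diameter still satisfy $\mu(U_i)\le C|U_i|^s$ under the stated hypothesis (radius $0<r$ is allowed), so the estimate goes through for all covers.
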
 
The second one is a classical technical lemma.
\begin{lemme}
\label{geo}
Let $A=B(x ,r)$ and $B=B(x^{\prime} ,r^{\prime})$ be two  closed balls, and $q\geq 3$  be such that $A\cap B\neq \emptyset$ and $A\setminus (qB)\neq \emptyset$. Then $r^{\prime}\leq r$ and $qB\subset 5A.$
\end{lemme}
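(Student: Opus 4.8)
The plan is to obtain both conclusions from the two hypotheses by elementary triangle‑inequality estimates; the argument works verbatim for $\|\cdot\|_\infty$ or indeed any norm. First I would pick a point $z\in A\cap B$ (which exists since $A\cap B\neq\emptyset$) and record that $\|x-x'\|\leq\|x-z\|+\|z-x'\|\leq r+r'$; this controls the distance between the two centres. Next I would pick $w\in A\setminus(qB)$; since $qB=B(x',qr')$ is a \emph{closed} ball, $w\notin qB$ forces the strict inequality $\|w-x'\|>qr'$, while $w\in A$ gives $\|w-x\|\leq r$. Combining these with the bound on $\|x-x'\|$ yields $qr'<\|w-x'\|\leq\|w-x\|+\|x-x'\|\leq r+(r+r')=2r+r'$, i.e.\ $(q-1)r'<2r$.

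From $(q-1)r'<2r$ together with $q\geq 3$ (so $q-1\geq 2$) I get $2r'\leq(q-1)r'<2r$, hence $r'<r$; in particular $r'\leq r$, which is the first assertion. Note this also rules out the degenerate situation $r=0$ automatically, so no special case is needed.

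For the inclusion $qB\subset 5A$, I would take an arbitrary $y\in qB$, so $\|y-x'\|\leq qr'$, and estimate $\|y-x\|\leq\|y-x'\|+\|x'-x\|\leq qr'+(r+r')$. Inserting the two bounds just established, $qr'<2r+r'$ and $r'<r$, gives $\|y-x\|<(2r+r')+(r+r')=3r+2r'<5r$, so $y\in B(x,5r)=5A$. This finishes the proof.

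There is essentially no obstacle here: the only point requiring a moment of care is the reading of the hypothesis $A\setminus(qB)\neq\emptyset$ as furnishing a point with $\|w-x'\|$ \emph{strictly} larger than $qr'$ (because $qB$ is closed), which is exactly what makes the strict inequality $(q-1)r'<2r$ — and hence $r'<r$ rather than just $r'\leq r$ at the intermediate stage — come out cleanly; everything else is bookkeeping with the triangle inequality.
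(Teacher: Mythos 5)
Your proof is correct and follows essentially the same route as the paper's: bound $\|x-x'\|\leq r+r'$ from $A\cap B\neq\emptyset$, use a point of $A\setminus(qB)$ and the triangle inequality to get $(q-1)r'\leq 2r$ (hence $r'\leq r$ and $qr'\leq 3r$), then a final triangle inequality for the inclusion $qB\subset 5A$. The only cosmetic difference is your insistence on strict inequalities, which the paper does not need.
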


\begin{proof}
Consider $z\in A\setminus qB$. One has
\[qr^{\prime}\leq \| z-x^{\prime}\|_{\infty}\leq \| z-x\|_{\infty}+\| x-x^{\prime}\|_{\infty}\leq r+r+r^{\prime}.\]
Hence $\frac{q-1}{2}r^{\prime}\leq r$, and in particular, one necessarily has $r^{\prime}\leq r$ and $qr^{\prime}\leq 2r+r^{\prime}\leq 3r.$

Furthermore, if $y\in qB$, then
\[\| y-x \|_{\infty}\leq \| x^{\prime}-y \|_{\infty}+\| x^{\prime}-x \|_{\infty}\leq qr^{\prime}+r^{\prime}+r\leq 5r.\]
This concludes the proof.
\end{proof}
 We construct thereafter a Cantor set $K$ as well as a sequence of strictly positive real numbers $(\varepsilon_{p})_{p\in\mathbb{N}}$ and a Borel probability measure $\eta$ such that:
\bigskip
\begin{itemize}
\item[•] $K\subset\limsup_{n\rightarrow +\infty} R_n$ and $\eta(K)=1$,\medskip
\item[•] The sequence $(\varepsilon_p)_{p\in\mathbb{N}}$ is decreasing with $\lim_{p\rightarrow +\infty}\varepsilon_p =0$ and there exists a constant $ C$ such that for any $p\in\mathbb{N}$, there exists $r_{p}>0$ verifying, for any ball $B\subset \mathbb{R}^d$ of radius $r$ less than $r_p$,  
\begin{equation}
\label{eqfinale}
\eta(B)\leq C.r^{s(\mu,\boldsymbol{\tau})-4\varepsilon_p}.
\end{equation}
\end{itemize}
Then, applying the mass distribution principle (Proposition \ref{MD}), since $\eta(K)=1$ one deduces that, for any $p>0$, $$\dim_H (\limsup_{n\rightarrow +\infty}R_n)\geq \dim_H (K)\geq s(\mu,\boldsymbol{\tau})-4\varepsilon_p,$$    
and letting $p\to+\infty$ concludes the proof.

\medskip

The construction of $(K,\eta)$ is decomposed into several steps. Without loss of generality we assume that $s(\mu,\boldsymbol{\tau})>0$. Fix a decreasing sequence $(\varepsilon_p)_{p\in\mathbb{N}}$ converging to 0 at $\infty$,   such that $\varepsilon_0 \leq   \max(1, s(\mu,\boldsymbol{\tau})/4)$.   

\subsection*{Step 1: Initialization} Let us start with a definition.
\begin{definition}
For $\nu\in\mathcal{P}([0,1]^d)$, $\beta\geq 0$, and $\varepsilon,\rho>0$, define  
$$
E_{\nu}^{\beta,\varepsilon,\rho}=\left\{x\in[0,1]^d:  \forall \, 0<r\leq \rho, \ B(x,r)\subset [0,1]^d\text{ and }  \nu(B(x,r))\leq r^{\beta-\varepsilon}\right\}.
$$
Then set 
$$ 
E_{\nu}^{\beta,\varepsilon}=\bigcup_{n\geq 1}E_{\nu}^{\beta,\varepsilon,\frac{1}{n}}.
$$
\end{definition}


With $\beta=\alpha=\underline{\dim}_H(\mu)$, since $\mu(\partial [0,1]^d)=0$ (due to Remark~\ref{nomassonboundary} and the assumption that $\mu$ is fully supported), for all $\varepsilon>0$, one has  $\mu(E^{\alpha,\varepsilon}_{\mu})=1$.  For all $p\in\mathbb{N}$, consider $\rho_{p}\in (0,1)$  small enough so that  
\begin{equation}
\label{equalab}
\mu(E_{\mu}^{\alpha,\varepsilon_p,\rho_{p}})\geq \frac{1}{2}.
\end{equation}
\medskip

Now, recall the following covering theorem due to Besicovitch(\cite{BES}):
\begin{theoreme}
\label{Besi}
There exists a positive integer $Q_{d}$, depending only on the dimension $d$, such that for every $E\subset [0,1]^{d}$,  for every set $\mathcal{F}=\left\{B(x, r(x) ): x\in E,  r(x) >0 \right\}$, there are $\mathcal{F}_1,...,\mathcal{F}_{Q_{d}}$ finite or countable collections of balls all contained in  $\mathcal{F}$ such that: 
\begin{itemize}
  \item each family $\mathcal{F}_i$ is composed of pairwise disjoint balls, i.e 
$\forall 1\leq i\leq Q_{d}$, $L\neq L'\in\mathcal{F}_i$, one has $L \cap L'=\emptyset,$ 

\item 
$E$ is covered by the families $\mathcal{F}_i$, i.e.
\begin{equation}\label{besi}
 E\subset  \bigcup_{1\leq i\leq Q_{d}}\bigcup_{L\in \mathcal{F}_i}L.
 \end{equation}
\end{itemize}
\end{theoreme}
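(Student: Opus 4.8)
The plan is to prove Theorem~\ref{Besi} by the classical greedy (Vitali-type) selection argument. First a reduction: since $E\subset[0,1]^d$ and the ambient norm is $\|\cdot\|_\infty$, if some $x\in E$ has $r(x)\ge1$ then $B(x,r(x))\supset B(x,1)\supset[0,1]^d\supset E$ and a single ball suffices, so I may assume $\sup_{x\in E}r(x)\le1$. I then build a finite or countable sequence $B_1=B(a_1,\rho_1),B_2=B(a_2,\rho_2),\dots$ in $\mathcal F$ as follows: having chosen $B_1,\dots,B_{j-1}$, set $E_j=E\setminus\bigcup_{i<j}B_i$; if $E_j=\emptyset$ stop, otherwise choose $a_j\in E_j$ with $r(a_j)>\tfrac34\sup\{r(x):x\in E_j\}$ and put $B_j=B(a_j,r(a_j))$. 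Two elementary facts follow: for $i<j$ one has $a_j\notin B_i$, hence $\|a_i-a_j\|_\infty>\rho_i$; and since $a_j\in E_j\subset E_i$, near-maximality gives $\rho_i>\tfrac34\rho_j$. Combining them, $\tfrac13\rho_i+\tfrac13\rho_j<\rho_i<\|a_i-a_j\|_\infty$, so the shrunken cubes $\tfrac13B_j$ are pairwise disjoint; being contained in a fixed cube this forces $\sum_j\rho_j^{\,d}<\infty$, in particular $\rho_j\to0$ when the process is infinite. Finally any $x\in E\setminus\bigcup_jB_j$ belongs to every $E_j$ and hence satisfies $r(x)\le\sup\{r(y):y\in E_j\}<\tfrac43\rho_j\to0$, which is impossible; therefore $E\subset\bigcup_jB_j$.

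The crux is the \emph{counting lemma}: there is an integer $M_d$, depending only on $d$, such that for every $k$ one has $\#\{j<k:B_j\cap B_k\neq\emptyset\}\le M_d$. To prove it, fix $k$, put $I=\{j<k:B_j\cap B_k\neq\emptyset\}$, and split $I$ according to the size of $\rho_j$ relative to $\rho_k$. For the $j\in I$ with $\rho_j\le3\rho_k$ the centres $a_j$ lie in $B(a_k,4\rho_k)$ while the pairwise disjoint cubes $\tfrac13B_j$ have side length comparable to $\rho_k$, so a volume count bounds their number by a dimensional constant. For the $j\in I$ with $\rho_j>3\rho_k$ — the ``large'' earlier cubes meeting the small $B_k$ — one uses that $a_k\notin B_j$ together with $B_j\cap B_k\neq\emptyset$ to place $a_j$ in a thin annulus about $a_k$, and then combines the disjointness of the $\tfrac13B_j$ with the fact that no selected cube contains the centre of a later one to show that the directions $(a_j-a_k)/\|a_j-a_k\|_2$ are pairwise separated by an angle $\theta_d>0$; a sphere-packing estimate on $S^{d-1}$ then bounds their number as well (here one passes from $\|\cdot\|_\infty$ to $\|\cdot\|_2$, the cubes being squeezed between Euclidean balls of comparable radii, keeping all constants dimension-dependent). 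Adding the two bounds gives $M_d$. This angular step, balancing the near-maximality of the radii against the ``uncovered centre'' property, is the only genuinely delicate point of the whole proof.

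It remains to organize the selected cubes into $Q_d$ disjoint subfamilies. Colour them greedily in the order of construction: $B_k$ receives the smallest positive integer not used as the colour of any $B_j$ with $j<k$ and $B_j\cap B_k\neq\emptyset$. By the counting lemma at most $M_d$ colours are ever forbidden, so the colouring uses at most $Q_d:=\max(1,M_d+1)$ colours (the $\max$ also absorbs the degenerate reduction above). The colour classes $\mathcal F_1,\dots,\mathcal F_{Q_d}$ are finite or countable subcollections of $\mathcal F$, each consisting of pairwise disjoint cubes, and $\bigcup_{i}\bigcup_{L\in\mathcal F_i}L=\bigcup_jB_j\supset E$ by the first step; this is precisely \eqref{besi}, which completes the proof.
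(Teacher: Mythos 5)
The paper does not actually prove this statement: it is the classical Besicovitch covering theorem, recalled with a citation to \cite{BES}, so there is no internal proof to compare yours against. What you have written is the standard textbook proof (greedy selection with $\tfrac34$-near-maximal radii, coverage via $\rho_j\to0$, a counting lemma obtained by splitting the earlier cubes meeting $B_k$ into those of comparable size and the much larger ones, then greedy colouring with $M_d+1$ colours), and the skeleton is correct: the selection process, the disjointness of the $\tfrac13 B_j$, the coverage argument, and the colouring step are all sound.

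The one real weakness sits exactly in the step you flag as delicate. For the ``large'' cubes ($\rho_j>3\rho_k$) you propose to bound the number of admissible directions by a Euclidean sphere-packing estimate, ``passing from $\|\cdot\|_\infty$ to $\|\cdot\|_2$, the cubes being squeezed between Euclidean balls of comparable radii.'' That reduction does not work as stated: the Euclidean angular lemma requires the centre $a_j$ to lie in a multiplicatively \emph{thin} annulus $\rho_j<\|a_j-a_k\|\le\rho_j+\rho_k\le\tfrac43\rho_j$ around $a_k$, and converting sup-norm constraints to Euclidean ones inflates the outer radius by $\sqrt d$, so the annulus becomes thick and two admissible centres may well share a Euclidean direction. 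The correct repair is to measure directional separation in the sup-norm itself: with $s_j=\|a_j-a_k\|_\infty$ and $u_j=(a_j-a_k)/s_j$, the constraints $\rho_j<s_j\le\rho_j+\rho_k$, $\rho_k<\tfrac13\rho_j$, $\rho_i>\tfrac34\rho_j$ and $\|a_i-a_j\|_\infty>\rho_{\min(i,j)}$ force $\|u_i-u_j\|_\infty$ to be bounded below by an absolute constant, and compactness of the unit sphere of $\|\cdot\|_\infty$ then bounds the number of such directions by a dimensional constant. (Equivalently one may invoke Federer's ``directionally limited'' form of the theorem, or Morse's extension to general convex bodies.) With that substitution your argument is complete; as it stands, the only genuinely hard estimate is asserted via a reduction that fails.
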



 For $x \in E_{\mu}^{\alpha,\varepsilon_1,\rad}\cap \limsup_{n\rightarrow +\infty}B_n $,  consider $n_{x}\geq 1$ large enough   so that 
$x \in B_{n_x}$, $4r_{n_x}\leq \rad$, and 
\begin{equation} 
\label{rajou1}
 r_{n_x}^{-\varepsilon_1} \geq \max \left\{ 4Q_d4^{\alpha-\varepsilon_1} , \rho_2^{-d/\tau_d}\right\}.
 \end{equation} 
  Set 
 \begin{equation}
 \label{Lball}
 L_{x}=B(x,4r_{n_x}). 
\end{equation} 
 Doing so for every $x\in E_{\mu}^{\alpha,\varepsilon_1,\rad}\cap \limsup_{n\rightarrow +\infty}B_n $ provides us with a Besicovith covering $\mathcal F^1=\left\{L_x:  x\in E_{\mu}^{\alpha,\varepsilon_1,\rad}\cap \limsup_{n\rightarrow +\infty}B_n \right\}$ such that for every $x$, the ball $L_x$ is naturally associated with an integer $n_x \ge 1$ such that $x\in B_{n_x}$ and $| L_x |=8r_{n_x}.$ Also,   the shrunk rectangle $R_{n_x}$ verifies $R_{n_x}\subset B_{n_x}\subset L_x.$ This is illustrated by Figure \ref{fig1}.

Applying now Theorem \ref{Besi},  from the family $\mathcal F^1$ one can extract $Q_d$ finite or countable families of balls $\mathcal{F}^{1}_i$ , $1\leq i\leq Q_d$, such that:\medskip
\begin{itemize}
\item[•] $\forall \, 1\leq i\leq Q_d, \ \forall L\neq L^{\prime}\in\mathcal{F}_i ^{1}$, it holds that $L\cap L^{\prime}=\emptyset,$\medskip
\item[•] $E_{\mu}^{\alpha,\varepsilon_1,\rad}\cap \limsup_{n\rightarrow +\infty}B_n\subset \bigcup_{1\leq i\leq Q_d}\bigcup_{L\in \mathcal{F}^1_i}L.$\medskip
\end{itemize}
Since $\mu\left(E_{\mu}^{\alpha,\varepsilon_1,\rad}\cap \left(\limsup_{n\rightarrow +\infty}B_n \right)\right)\geq\frac{1}{2}$, there exists   $1\leq i_1\leq Q_d$ such that 
$$\mu\Big(\bigcup_{L\in\mathcal{F}_{i_1}}L\Big)\geq  \frac{\mu(E_{\mu}^{\alpha,\varepsilon_1,\rad}\cap \big(\limsup_{n\rightarrow +\infty}B_n \big))}{Q_d}\geq \frac{1}{2Q_d}.$$
Denote by $(L_{k}^{(1)})_{k\in\mathbb{N}}$ the sequence of balls such that $\mathcal{F}^1 _{i_1}=\left\{L_{k}^{(1)} \right\}_{k\in\mathbb{N}}$, $(x_{k}^{(1)})_{k\in\mathbb{N}}$ the sequence of points such that for all $k \in\mathbb{N}$, $L_{k}^{(1)}=L_{x_{k}^{(1)}}$, and set $r_{k}^{(1)}=r_{x_{k}^{(1)}}.$ 
There exists $N_1 \in \mathbb{N}$ so that
$$\mu\Big(\bigcup_{1\leq k\leq N_1}L_{k}^{(1)}\Big)\geq\frac{\mu\left(\bigcup_{L\in\mathcal{F} ^1 _{i_1}}L\right)}{2}.$$
Set $\mathcal{F}_1 =\left\{L_{k}^{(1)} \right\}_{1\leq k\leq N_1}$. One has
\begin{equation}
\label{cov 1}
\mu\Big(\bigcup_{L\in\mathcal{F}_1}L\Big)\geq \frac{1}{4Q_d}.
\end{equation}
Recall that with every ball $L_{k}^{(1)}$ are naturally associated the ball $B_{n_{k}^{(1)}}$ and  the rectangle~$R_{n_{k}^{(1)}}$, where $n_{k}^{(1)}=n_{x_{k}^{(1)}}$; set $R^{(1)}_{k}=R_{n_{k}^{(1)}}$.  Then define $K_1$, the first generation of the Cantor set by setting
$$
\mathcal{K}_1 =\left\{ R^{(1)}_{k}\right\}_{1\leq k\leq N_1}\text{ and }
K_1 =\bigcup_{R\in\mathcal{K}_1}R.
$$
Finally,  measure $\eta_1$ on the algebra generated by $\mathcal{K}_1$ is obtained by concentrating the $\mu$-measure  of the balls $L_x$ on the rectangle $R_{n_x}$. More precisely, for $1\leq k\leq N_1$ set
$$\eta_1(R_{k}^{(1)})=\frac{\mu\left(L_{k}^{(1)}\right)}{\sum_{1\leq k^{\prime}\leq N_1}\mu\big(L^{(1)}_{k^{\prime}}\big)}.$$
 Since for all $1\leq k\leq N_1$, the center $x_{k}^{(1)}$ of $L^{(1)}_k$ belongs to $E_{\mu}^{\alpha
 ,\varepsilon_1,\rad}$, recalling that
 $| L_{x_{k}^{(1)}} |/2=4r_{n_{k}^{(1)}}\leq \rad$, the disjointness of the $L_{j}^{(1)}$, as well as  the inequality \eqref{cov 1}, we get that for all $1\leq k\leq N_1$,
\begin{equation}
\label{minomesrec}
\eta_1(R_{k}^{(1)})\leq 4Q_d\left(4r_{n_{k}^{(1)}}\right)^{\alpha-\varepsilon_1}\leq \left(r_{n_{k}^{(1)}}\right)^{\alpha-2\varepsilon_1},
\end{equation}
where \eqref{rajou1} has been used.

\subsection*{Step 2: Constructing the second generation} 
This step consists of two sub-steps: First we associate a set of dyadic cubes with each rectangle previously obtained, and then we work inside each of these cubes.

\subsection*{Sub-step 2.1: A set of dyadic cubes inside each $R$ of $\mathcal K_1 $} 

\medskip
Consider a rectangle $R$. There exists an orthogonal matrix $O\in\mathcal{O}_{d}(\mathbb{R})$, a point $x\in [0,1]^d$ and  $0<\ell_d \leq \ell_2 \leq...\leq \ell_1$ such that
 $$R=x+O \widetilde R, \text{ with }\widetilde R=\prod_{i=1}^{d}[0,\ell_i].$$
 
Set $p=-\left\lfloor \log_2 \left(\frac{\ell_d}{8\sqrt{d}}\right)\right  \rfloor$. Intuitively, $2^{-p}\approx \frac{\ell_d}{8\sqrt{d}}$, so that there are some cubes included in $R$ with side-length $2^{-p}.$ We associate  with $R$ the set of dyadic cubes
$$\mathcal{C}(R)=\left\{D\in\mathcal{D}_{p}  :  D\subset  R , \ D=\prod_{i=1}^{d}[k_i 2^{-p},(k_{i}+1)2^{-p}], \ 8 | k_i , \ \forall\, 1\leq i\leq d\right\} .$$

%
%
%

Observe that  $\mathcal{C}(R)$ consists in dyadic cubes of generation $p$ inside $R$   that are quite far from each other. This will ensure that the rectangles used at a given generation of the construction of the Cantor set are well separated. Also, there exist a constant $C_d\ge 1$ depending only on the dimension $d$, such that the side length $2^{-p}$ of each $C\in \mathcal C(R)$ satisfies $C^{-1}_d \ell_d\le 2^{-p}\le C_d \ell_d$, as well as a constant $\kappa_d\geq 1$ such that
$$\kappa_d ^{-1}\prod_{i=1}^{d}\frac{\ell_i}{\ell_d}\leq\#\mathcal{C}(R)\leq \kappa_d \prod_{i=1}^{d}\frac{\ell_i}{\ell_d}.$$

Recalling \eqref{rn}, for every $n\in\mathbb{N}$, one gets
   \begin{equation}
   \label{minoC}
   \kappa_{d}^{-1}\cdot  r_{n}^{\sum_{i=1}^{d}\tau_i-\tau_d}\leq \#\mathcal{C}(R_n)\leq \kappa_{d}\cdot r_{n}^{\sum_{i=1}^{d}\tau_i-\tau_d}.
   \end{equation}
 
Now we  construct a measure $\eta_2$, which refines the measure $\eta_1$ by distributing the mass uniformly between the cubes of $\mathcal{C}(R)$ for $R\in \mathcal{K}_1$. For every $1\leq k\leq N_1$ and every $D\in\mathcal{C}(R_{k}^{(1)})$, set 
 $$\eta_2(D)=\frac{\eta_1(R_{k}^{(1)})}{\#\mathcal{C}(R_{k}^{(1)})}.$$
 By construction, $\eta_2 (R_{k}^{(1)})=\eta_1 (R_{k}^{(1)}).$
  Recalling  \eqref{minomesrec} and \eqref{minoC}, one gets
 \begin{align}
  \label{majocarre}
\eta_2(D) = \frac{\eta_1(R_{k}^{(1)})}{\#\mathcal{C}(R_{k}^{(1)})}\leq  \frac{\Big(r_{n_{k}^{(1)}}\Big)^{\alpha-2\varepsilon_1}}{\kappa_{d}^{-1} \cdot \Big(r_{n_{k}^{(1)}}\Big)^{\sum_{i=1}^{d}-\tau_d+\tau_i}}
= \kappa_{d}\cdot  \Big(r_{n_{k}^{(1)}} ^{\tau_d}\Big)^{\frac{\alpha-2\varepsilon_1+\sum_{i=1}^{d}\tau_d-\tau_i }{\tau_d} }.
\end{align}

\subsection*{Sub-step 2.2: Construction in each cube of $\mathcal C(R)$} We start with preliminary observations about the measure $\mu$. Recall Definition~\ref{QB}. Since $\mu$ is a quasi-Bernoulli measure, for every  $q\in\mathbb{N}$, every $D\in\mathcal{D}_q$ such that $\mu(D)>0$, for every $x\in[0,1]^d$ and $r>0$ such that $B(x,r)\subset D$, due to \eqref{qua} one has
\begin{align*}
\mu(B(x,r))=\mu\left(T_{D} ^{-1}(T_{D} (B(x,r)))\right) 
&=\mu(D) \mu^{D}\left(B\left(T_D (x),\frac{r}{ 2^{-q}}\right )\right)\\ 
&\leq C_\mu\, \mu(D)\, \mu\left(B\left(T_D (x),\frac{r}{ 2^{-q}}\right )\right).\nonumber
\end{align*}
Thus, for all $x\in [0,1]^d$ and $r>0$ such that $B(x,r)\subset [0,1]^d$ one has
\begin{equation}
\label{minoqb}
 \mu\left(B(T_{D}^{-1}(x),r 2^{-q})\right)\leq C_\mu\, \mu(D)\mu(B(x,r)).
 \end{equation}
Also, for every $p\in\mathbb{N}$,
 \eqref{qua} yields  
 \begin{equation}\label{muD}
 \mu^{D}(E_{\mu}^{\alpha,\varepsilon_p,\rho_{p}})\geq \frac{\mu(E_{\mu}^{\alpha,\varepsilon_p,\rho _{p}})}{C_{\mu}}\geq \frac{1}{2C_{\mu}}.
 \end{equation}
Moreover, 
  \begin{align*}
  &T_{D} ^{-1}(E_{\mu}^{\alpha,\varepsilon_p,\rho _{p}})\\ &=\left\{T_{D}^{-1}(x): \ \forall\, r\leq \rho_p , \,  B(x,r)\subset[0,1]^d,\, \mu(B(x,r))\leq r^{\alpha -\varepsilon_p}\right\} \\
  &=\left\{T_{D}^{-1}(x): \ \forall\, r\leq \rho _{p}, \,  B(x,r)\subset[0,1]^d,\, \mu\left(T_{D}\left(B\left(T_{D}^{-1}(x), \frac{r}{ 2^{q}}\right)\right)\right)\leq r^{\alpha -\varepsilon_p}\right\},
  \end{align*}
and using \eqref{minoqb}, one gets
  \begin{align*}
    &T_{D} ^{-1}(E_{\mu}^{\alpha,\varepsilon_q,\rho_q}) \\
    &\subset\left\{T_{D}^{-1}(x): \ \forall\, r\leq \rho _{p}, \,B(x,r)\subset[0,1]^d,\, \frac{\mu(B(T_{D}^{-1}(x),r 2^{-q}))}{\mu(D)}\leq C_{\mu}r^{\alpha -\varepsilon_p}\right\}\\
    &=\left\{y\in D: \ \forall\, r\leq \rho _{p} 2^{-q}, \, B(y,r)\subset D,\, \frac{\mu(B(y,r))}{\mu(D)}\leq C_{\mu}\Big(\frac{r}{2^{-q}}\Big)^{\alpha -\varepsilon_p}\right\}.
  \end{align*}
It follows that if we fix $p$  as above and set
  \begin{align}
  \label{E_D}
  E^{\varepsilon_p}_D= \limsup_{n\rightarrow +\infty}B_n \ \cap
  \left\{y\in D: \ \forall \,r\leq \rho _{p} 2^{-q}, \, B(y,r)\subset D,\, \frac{\mu(B(y,r))}{\mu(D)}\leq C_{\mu}\Big(\frac{r}{2^{-q}}\Big)^{\alpha -\varepsilon_p}\right\},
    \end{align}
then  by Definition \ref{QB} and the fact that $\mu(\limsup_{n\rightarrow+\infty}B_n)=1,$ we have
  \begin{equation}
 \label{rajou2}
  \mu(E^{\varepsilon_p}_D)=\mu(T_D ^{-1}(T_D (E^{\varepsilon_p}_D)))=\mu(D)\mu^D(T_D (E^{\varepsilon_p}_D))\ge \mu(D)\frac{\mu(E_{\mu}^{\alpha,\varepsilon_p,\rho _{p}})}{C_{\mu}}\geq\frac{\mu(D)}{2C_{\mu}},
  \end{equation}
  where we used \eqref{muD}.
  
  
 \medskip
 
We now continue the construction.  Consider $R\in\mathcal{K}_1$. Fix $D\in\mathcal{C}(R)$. Recall that $p(D)$ is the unique integer such that $D\in \mathcal{D}_{p(D)}$.  The set  $E_D ^{\varepsilon_2}$ is well defined since $\mu(D)>0$ (the measure~$\mu$ has been supposed to be fully supported on $[0,1]^d$). 
 For every $x\in E_D ^{\varepsilon_2}$, consider $n_x$ large enough so that:
 
 \medskip
 $\bullet$
 $x\in B_{n_x}$,
\medskip

$\bullet$ $n_x\geq 2$ and 
 \begin{equation} 
 \label{posi} 
 4r_{n_x}\leq  \rho_{2} 2^{-p(D)}, \ \text{ and } \  r_{n_x}^{-\varepsilon_2}\geq \max\left\{ 4C_{\mu}Q_d \cdot \eta_2(D)  (4\cdot 2^{p(D)})^{\alpha-\varepsilon_2},\rho_{3}^{-d/\tau_d}\right\}.
 \end{equation}
Set $L_x=B(x,4r_{n_x})$, as in step 1 (see \eqref{Lball}). By repeating the same argument as in step 1, one can extract from $\left\{L_x  :  x\in  E_D ^{\varepsilon_2}\right\}$
 a finite number $N_D$ of balls, $L_{1}^{(D)}=L_{x_{1}^{
 (D)}},...,L_{N_D }^{(D)}=L_{x_{N_{D}}^{(D)}}$ such that for all 
 $ 1\le k_1\neq k_2 \leq N_{D}$ one has $L_{k_1}^{(D)}\cap L_{k_2}^{(D)}=\emptyset$ and by \eqref{rajou2}
 \begin{equation}
 \label{recouL} 
\mu\Big(\bigcup_{1\leq k\leq N_{D}}L_{k}^{(D)}\Big)\geq \frac{\mu(E_D ^{\varepsilon_2})}{2} \geq 
 \frac{\mu(D)}{4Q_d C_{\mu}}.
\end{equation} 
 and with each ball $L_{k}^{(D)}$ are associated the ball $B_{n_{x_{k}^{(D)}}}$ and the rectangle $R_{n_{x_{k}^{(D)}}}$, that we denote by $B^{(D)}_{k}$ and $R_{k}^{(D)}$ respectively; we also set $r_k^{(D)}=r_{n_{x_k^{(D)}}}$. Then define the collection of rectangles of second generation by setting 
  $$\mathcal{K}(R)=\bigcup_{D\in\mathcal{C}(R)}\left\{R_{k}^{(D)} \right\}_{ 1\leq k\leq N_{D}}\text{ and }
  \mathcal{K}_2=\bigcup_{R\in \mathcal{K}_1}\mathcal{K}(R),
  $$
  and 
  $$
  K_2=\bigcup_{R\in\mathcal{K}_2}R.
  $$
  One extends further the measure $\eta_1$ to the algebra generated by the elements of the set  $\mathcal{K}_1 \bigcup\bigcup_{R\in\mathcal{K}_1}\mathcal{C}(R)\bigcup \mathcal{K}_2$  by distributing the mass according to $\mu$ at that scale. More precisely, for all $R\in\mathcal{K}_1$, $D\in\mathcal{C}(R)$ and $1\leq k\leq N_D$, one sets
  $$\eta_2(R_{k}^{(D)})=\eta_2(D) \frac{\mu(L_{k}^{(D)})}{\sum_{1\leq k^{\prime}\leq N_D}\mu(L_{k^{\prime}}^{(D)})}$$

 Note the following facts:\medskip 
  \begin{itemize} 
 \item[•] If $R\in \mathcal{K}_1 $, $D,D^{\prime}\in\mathcal{C}(R)$,  $1\leq k \leq N_D$ and $1\leq k' \leq N_{D^{\prime}}$ are such that \medskip $R_{k }^{(D)} \neq R_{k' }^{(D^{\prime})}\in \mathcal{K}(R)$, then $3B_{k}^{(D)}\cap 3B_{k' }^{(D^{\prime})}=\emptyset.$\medskip
  \item[•] If  $R\in\mathcal{K}_1$, $D\in\mathcal{C}(R)$ and $1\leq k\leq N_D$, using the second assertion of \eqref{posi} and the fact that the ball $L_{k}^{(D)}$ is centered on  $E_D ^{\varepsilon_2}$, then
  $$\frac{\mu(L_{k}^{(D)})}{\mu(D)}\leq C_{\mu} \left(\frac{4r_{k}^{(D)}}{ 2^{-p(D)}}\right)^{\alpha-\varepsilon_2} $$
  so that by \eqref{recouL} and the third assertion of \eqref{posi}, we get
  \begin{equation}
  \label{majorect1}
  \eta_2(R_{{k}}^{(D)})\leq\big ( \eta_2(D) 4Q_d C_{\mu} (4\cdot 2^{p(D)})^{\alpha -\varepsilon_2}\big ) \cdot (r_{k}^{(D)})^{\alpha -\varepsilon_2}\leq (r_{k}^{(D)})^{\alpha-2\varepsilon_2}.
\end{equation}  
    \end{itemize}

\subsection*{ Further steps: Induction scheme} We proceed as in step 2. Suppose that $ p\geq 2$, and  for all $1\leq q\leq p $, a set $K_q$ and a measure $\eta_q$, defined on the algebra generated by the elements of $\bigcup_{1\leq p\leq q}\mathcal{K}_p \bigcup_{R\in\mathcal{K}_p}\mathcal{C}(R)$, have been constructed in such a way that \eqref{minomesrec} holds and:\medskip
\begin{enumerate}
\item[\textbf{(i)}] For all $1\leq q\leq p$, $\mathcal{K}_q$ is a finite subset of $\left\{R_n \right\}_{n\geq q}.$\medskip
\item[\textbf{(ii)}] For all $ 2\leq q\leq p$,  for all $R\in\mathcal{K}_q$, there exists $R^{\prime}\in \mathcal{K}_{q-1}$ and $D\in\mathcal{C}(R^{\prime})$ such that $R\subset D$; one denotes by $\left\{R_{k}^{(D)}\right\}_{1\leq k\leq N_D}$ the family of rectangles of $\mathcal{K}_q$ included in~$D$.

\medskip
\item[\textbf{(iii)}] For all $1\leq q \leq p-1$ and $R\in\mathcal{K}_q $, if $r^{\tau_d}$ is the length of the smallest side of $R$, then
\begin{equation}
\label{sub}
(r^{\tau_d})^{-\varepsilon_{q}}\geq \rho_{q+1}^{-d}.
\end{equation}
\item[\textbf{(iv)}] For all $2\leq q\leq p$,  $R\in\mathcal{K}_{q-1} $, $D\in\mathcal{C}(R)$ and $1\leq k\leq N_D$, with the rectangle $R_{k}^{(D)}$ are naturally associated a point $x_{k}^{(D)}\in E_D^{\varepsilon_q}$, a ball  $L_{k}^{(D)}=B\left (x_{k}^{(D)}, 4r_k^{(D)}\right )$, as well as  some integer  $n_k\in\mathbb N$, such that $n_k\ge q$, $x_{k}^{(D)}\in B_k^{(D)}:=B_{n_k}=B(x_{n_{k}},r_{n_k})$, $R_{k}^{(D)}=R_{n_k}$,   $r_k^{(D)}=r_{n_{k}}$ and $4 r_k^{(D)}\leq 2^{-p(D)} \rho_{q}$. In particular, due to \eqref{E_D}, one has
\begin{equation}
\label{majoraf}
\frac{\mu(L_{k}^{(D)})}{\mu(D)}\leq C_\mu \left( \frac{4r_{n_{k}^{(D)}}}{2^{-p(D)}}\right)^{\alpha-\varepsilon_q}.
\end{equation}
\medskip
 \item[\textbf{(v)}] For all $ 2\leq q\leq p$,  $R\in \mathcal{K}_{q-1}$, $D,D^{\prime}\in\mathcal{C}(R)$, $1\leq k\leq N_D$ and  $1\leq k'\leq N_{D'}$ such that $R_{k }^{(D)}\neq R_{k'}^{(D^{\prime)}}$, one has $3B_{k}^{(D)}\cap 3B_{k'}^{(D^{\prime})}=\emptyset.$
\medskip
\item[\textbf{(vi)}] For all $1\leq q\le q' \leq p$ and $R\in\mathcal{K}_q$, $\eta_{q}(R)=\eta_{q'} (R).$\medskip 
\medskip
\item[\textbf{(vii)}] For all $2\leq q\leq p$,  $R\in \mathcal{K}_{q-1}$, $D\in\mathcal{C}(R)$ and $1\leq k\leq N_D$, one has
\begin{equation}
\label{defCf}
\eta_{q}(D)= \frac{\eta_{q-1}(R)}{\#\mathcal{C}(R)}\text{ and } (r_{k}^{(D)})^{-\varepsilon_q}\geq 4C_{\mu}Q_d\cdot \eta_q (D) (4\cdot 2^{p(D)})^{(\alpha-\varepsilon_q)}.
\end{equation}
\medskip
\item[\textbf{(viii)}] For all $2\leq q\leq p$,  $R\in \mathcal{K}_{q-1}$, $D\in\mathcal{C}(R)$ and $1\leq k\leq N_D$,  one has
\medskip
\begin{equation}
\label{CoverLf}
\sum_{1\leq k^{\prime}\leq N_D}\mu(L_{k^{\prime}}^{(D)})\geq \frac{\mu(D)}{4Q_d}
\end{equation}
and 
\begin{equation}
\label{defmesRf}
\eta_{q}(R^{(D)}_{k})=\eta_q (D)\cdot \frac{\mu(L_{k}^{(D)})}{\sum_{1\leq k^{\prime}\leq N_{D}}\mu(L_{k^{\prime}}^{(D)})}.
\end{equation}
\end{enumerate}

Notice that  by \eqref{majoraf},  \eqref{defCf} \eqref{CoverLf} and \eqref{defmesRf}, for all $ 2\leq q \leq p$, $R\in\mathcal{K}_{q-1}$, $D\in\mathcal{C}(R)$  and $1\leq k\leq N_D $, one has
  \begin{align}
  \label{majorect}
 \eta_{q}(R^{(D)}_{k})&=\eta_q (D)\cdot \frac{\mu(L_{k}^{(D)})}{\sum_{1\leq k^{\prime}\leq N_{D}}\mu(L_{k^{\prime}}^{(D)})}\leq \eta_q (D)\frac{\mu(L_{k}^{(D)})}{\mu(D)(4Q_d)^{-1}}\nonumber\\
 &\leq \eta_q(D)\Big(4r_{k}^{(D)}\Big)^{\alpha -\varepsilon_q}4Q_d C_{\mu}\Big(2^{p(D)}\Big)^{\alpha-\varepsilon_q} \leq \Big(r_{k}^{(D)}\Big)^{\alpha-2\varepsilon_q}.
\end{align}  
Thus, for all $ 2\leq q \leq p$, $R\in\mathcal{K}_{q-1}$ and $D\in\mathcal{C}(R)$, denoting by $r^{\tau_d}$ the length of the smallest side of $R$, by  \eqref{minomesrec}, \eqref{minoC}, \eqref{defCf},\eqref{majorect} and \textbf{(vi)}, one has
\begin{align}
\label{majomesC}
\eta_{q}(D)= \frac{\eta_{q-1}(R)}{\#\mathcal{C}(R)}\leq \frac{r^{\alpha-2\varepsilon_{q-1}}}{\kappa_{d}^{-1} \cdot r^{\sum_{i=1}^{d}-\tau_d+\tau_i}}&\leq \kappa_{d} r^{\alpha-2\varepsilon_{q-1}+\sum_{i=1}^{d}\tau_d-\tau_i }\nonumber \\
&\leq \kappa_{d} \Big(r^{\tau_d}\Big)^{\frac{\alpha-2\varepsilon_{q-1}+\sum_{i=1}^{d}\tau_d-\tau_i }{\tau_d} }.
\end{align}
Let us now explain the induction. Take $R\in \mathcal{K}_p$ and $D\in\mathcal{C}(R)$.
For every $x\in E^{\varepsilon_{p+1}}_{D}$, consider an integer $n_{x}$ large enough so that:\medskip
\begin{itemize}
\item[•] $x\in B_{n_x}$,

\item[•] $n_x \geq p+1$, $4r_{n_x}\leq \rho_{p+1} 2^{-p(D)}$, and 
$$
r_{n_x}^{-\varepsilon_{p+1}}\ge \max\left (
4^{\alpha -\varepsilon_{p+1}}\frac{\eta_p (R)}{\# \mathcal{C}(R)} 4Q_d C_{\mu} 2^{p(D)(\alpha-\varepsilon_{p+1})}, \rho_{p+2}^{-d/\tau_d}\right).
$$
\end{itemize}
Using Besicovitch covering Theorem \ref{Besi}, one can extract from the covering of  $E^{\varepsilon_{p+1}}_{D}$, 
 $\left\{L_{x}:=B(x,4r_{n_x}):  x\in E^{\varepsilon_{p+1}}_{D} \right\}$,
 a finite set of balls $\mathcal{F}(D):=\left\{L_{k}^{(D)}:=L_{x_{k}}^{(D)}  \right\}_{1\leq k\leq N_{D}}$ such that\medskip
 \begin{itemize}
 \item[•] $\forall k\neq k' \leq N_{D}$, $L_{k}^{(D)}\cap L_{k'}^{(D)}=\emptyset.$ In particular, $3B_{n_{x_{k}^{(D)}}}\cap 3B_{n_{x_{k'}^{(D)}}}=\emptyset,$\medskip
 \item[•]one has
  
\begin{equation}
\label{coverc}
 \mu\Big(\bigcup_{1\leq k\leq N_{D}}L_{k}^{(D)}\Big)\geq \frac{1}{2} \mu(E^{\varepsilon_{p+1}}_{D}) \geq 
 \frac{\mu(D)}{4Q_d C_{\mu}}.
 \end{equation}
 \medskip
  
  \end{itemize}
  Consider the  collection of rectangles naturally associated with the balls  {$L_{k}^{(D)}$}
  $$\mathcal{K}_{p+1}(R)=\bigcup_{D\in\mathcal{C}(R)}\left\{R_{k}^{(D)}:=R_{n_{x_{k}^{(D)}}}\right\}_{1\leq k\leq N_{D}}.$$
Then define 
  $$\mathcal{K}_{p+1}=\bigcup_{R\in \mathcal{K}_p} \mathcal{K}(R)\text{ and } K_{p+1}=\bigcup_{R\in\mathcal{K}_{p+1}}R.$$
  
 The probability  measure $\eta_p $ can be extended from the algebra generated by the elements of $\bigcup_{1\leq p\leq p}\mathcal{K}_{q}\bigcup\bigcup_{R\in\mathcal{K}_p}\mathcal{C}(R)$ to the algebra generated by the sets  of  the union $\bigcup_{1\leq q\leq p+1}\mathcal{K}_{q}\bigcup\bigcup_{R\in\mathcal{K}_p}\mathcal{C}(R)$ as follows: For $R\in\mathcal{K}_{p}$ and $D\in\mathcal{C}(R)$, we impose that 
\begin{equation}
\eta_{p+1}(R)=\eta_p (R)\text{ and } \eta_{p+1}(D)=\frac{\eta_p (R)}{\# \mathcal{C}(R)} .
\end{equation}

And then, for $R\in\mathcal{K}_p$, $D\in\mathcal{C}(R)$ and  $1\leq k\leq N_D$, we set
\begin{equation}
\label{etar}
  \eta_{p+1}(R_{k}^{(D)})=\eta_{p+1} (D)\cdot\frac{\mu(L_{k}^{(D)})}{\sum_{1\leq k^{\prime}\leq N_D}\mu(L_{k^{\prime}}^{(D)})}.
  \end{equation}
  It is easily checked that properties \textbf{(i)} to \textbf{(viii)} hold for $p+1$ and this ends the induction.
  
  \subsection*{Last step: the Cantor set and some of its properties.}
Set $\mathcal{K}_0 =[0,1]^d$ and $\eta_{0}([0,1]^d)=1.$ Define
$$\mathcal{K}=\bigcup_{p\in\mathbb{N}}\mathcal{K}_p\text{ and }K=\bigcap_{p\in\mathbb{N}}K_p.$$
By construction, item \textbf{(i)} of the recursion implies that $K\subset \limsup_{n\to\infty} R_n$. Now, for each $p\ge 1$, let  $\widetilde \eta_p$ be the element of $\mathcal P([0,1]^d)$  supported on $K_p$ and such that for every $R\in\mathcal K_p$ the restriction of $\eta_p$ to $R$ has $\frac{\eta_p(R)}{\mathcal {L}^d(R)}$ as density with respect to $\mathcal{L}^d_{|R}$.  It is easily seen, due to the separation property of the elements of $\mathcal K_p$, for all $p\in\mathbb{N}$, that $(\widetilde\eta_p)_{p\in \mathbb N^*}$ converges weakly to a Borel probability measure $\eta$  such that $\eta(R)=\eta_{p}(R)$  for all $p\in\mathbb{N}$ and $R\in\mathcal{K}_p$.

\medskip

Note that by construction the following properties hold:\medskip
\begin{itemize}
 \item[•] \textbf{Uniform separation property:} For all $p\in\mathbb{N}$ and $n\in\mathbb{N}$ such that $R_{n}\in \mathcal{K}_p$, if  $n_1 ,  n_2 \in \mathbb{N}$ are such that  $R_{n_1} \neq R_{n_2}\in \mathcal{K}(R_n)=\left\{ R'\in\mathcal{K}_{p+1}: \, R^{\prime}\subset R_n\right\}$, then one has $3B_{n_1}\cap 3B_{n_2}=\emptyset$. Indeed, in the case where  $R_{n_1}$ and $R_{n_2}$ are elements of the same $D\in\mathcal C(R_n)$, this follows from \textbf{(v)}; otherwise, this follows from the fact that two distinct elements $D$ and $D'$ of $\mathcal C(R_n)$ are distant from each other by at least $8\cdot 2^{-p(D)}$, where, as before,   $p(D)$ is the unique integer such that $D\in \mathcal{D}_{p(D)}$. 
 
 \medskip
\item[•] The estimates  (\ref{majorect}) and (\ref{majomesC}) show (by induction) that for all $p\in\mathbb N^*$ and $n\in\mathbb{N}$ such that $R_{n}\in \mathcal{K}_p$ one has 
$$
\eta( R_n)\leq r_n^{\alpha -2\varepsilon_p},
$$
and for all $D\in\mathcal{C}(R_n)$, 
\begin{equation}
\label{C}
\eta(D)\leq \kappa_d \,  r_{n}^{\alpha-2\varepsilon_p +\sum_{1\leq i\leq d}\tau_d-\tau_i}=\kappa_d \Big(r_{n}^{\tau_d}\Big)^{\frac{\alpha -2\varepsilon_p+\sum_{1\leq i\leq d}\tau_d-\tau_i}{\tau_d}}.
\end{equation}
\end{itemize}

\subsection*{Upper bound for the $\eta$-measure of a ball.}

\bigskip
Let $C$ be a ball (recall that it is an Euclidean cube) of side length $r$ contained in $[0,1]^d$. Several cases are distinguished. 
\medskip
 

$\bullet$ When $C$ intersects $K_p$ for at most finitely many $p\in\mathbb{N}$, it is clear that $\eta(C)=0$, and  we set $p_C=+\infty$.

\medskip

$\bullet$ When $C$ intersects a unique rectangle of $\mathcal{K}_p$, say $R_{n_C (p)}$, for infinitely many $p\in\mathbb{N}$, then $\eta(C)\leq \eta(R_{n_C(p)})\leq  r_{n_C (p)}^{\alpha -2\varepsilon_p}$ for infinitely many $p$, so  $\eta(C)=0$. Again, we set $p_C=+\infty$.

\medskip

$\bullet$ Suppose now that we are not in one of the previous cases. There exists $p_{C}\in\mathbb{N}$ such that if $ p \leq p_{C}$, $C$ intersects a unique rectangle of $\mathcal K_{p}$ and if  $p \geq p_{C}+1$, $C$ intersects at least two rectangles of $K_p$. \medskip
 Denote by $R_{n_C}$ the unique rectangle in $\mathcal K_{p_{C}}$ intersecting $C$. Let $v>0$  be such that $r=r_{n_C}^{v}$. Again, several cases are distinguished.\medskip

\noindent \textbf{(i)} Suppose $r\geq r_{n_C}^{\tau_d}$ (i.e. $v\leq \tau_d $):  
Suppose, moreover, that $r<r_{n_C}$, i.e. $1<v\le \tau_d$. Recall that for $D\in\mathcal{C}(R_{n_C})$ one has (see (\ref{C}))
 $$\eta(D)\leq \kappa_d \, \Big(r_{n_C}\Big)^{\alpha -2\varepsilon_{p_C}+\sum_{1\leq i\leq d}\tau_d-\tau_i}.$$
 Also, there exists  $\widetilde{\kappa}_d >0$, depending on $d$ so that 
%
%
 \begin{align*}
\#\left\{D\in\mathcal{C}(R_{n_C}): \ D\cap C\neq \emptyset\right\} & \leq  \widetilde{\kappa}_d \,\prod_{i:  \tau_i<v}\Big(\frac{r_{n_C}^v}{r_{n_C}^{\tau_d}}\Big) \prod_{i: \tau_i\geq v}\Big(\frac{r_{n_C}^{\tau_i}}{r_{n_C}^{\tau_d}}\Big)\\
 &\leq \widetilde{\kappa}_d \, r_{n_C}^{-d\tau_d+\sum_{i:  \tau_i<v}v+\sum_{i:  \tau_i\geq v}\tau_i}.
 \end{align*}
Provided that $\kappa_d$ was chosen larger than $\widetilde{\kappa}_d$ at first, one gets 
 
This gives the following upper bound for $\eta(C)$:
 \begin{align*}
 \eta(C) &\leq \sum_{D\in\mathcal{C}(R_{n_C}):D\cap C\neq \emptyset}\eta(D)\\
 &\leq (\#\left\{D\in\mathcal{C}(R_{n_C}): \ D\cap C\neq \emptyset\right\})\cdot \kappa_d \, \Big(r_{n_C}\Big)^{\alpha -2\varepsilon_{p_C}+\sum_{1\leq i\leq d}\tau_d-\tau_j}\\ 
 &\ \leq  \kappa_d^2  \, \Big(r_{n_C}\Big)^{-2\varepsilon_{p_C}}\,\Big(r_{n_C}\Big)^{-d\tau_d+\alpha+\sum_{i: \tau_i<v}v+\sum_{i: \tau_i\geq v}\tau_i+\sum_{1\leq i\leq d}\tau_d-\tau_i} &\\
 &\ \leq \kappa_d^2 \,\Big(r_{n_C}\Big)^{-2\varepsilon_{p_C}}\, \Big( r_{n_C}\Big)^{\alpha+\sum_{i: \tau_i<v}v-\tau_i}&\\
 &\ \leq \kappa_d^2 \, \Big(r^{-2\varepsilon_{p_C}}\Big)\,r^{\frac{\alpha+\sum_{i: \tau_i<v}v-\tau_i}{v}}.
 \end{align*}
The mapping $f:v\mapsto \frac{\alpha+\sum_{i: \tau_i <v}v-\tau_i}{v}$ reaches its minimum at one of the $\tau_i$, say $\tau_{i_0}$ with $1\leq i_0\leq d$. This can be rephrased as $s(\mu,\boldsymbol{\tau})=\min_{1\leq i\leq d}(\frac{\alpha+\sum_{1\leq j\leq i}\tau_i-\tau_j}{\tau_i})=f(\tau_{i_0})$. It follows that
 \begin{equation}
 \label{majo}
 \eta(C)\leq \kappa_d^2 \, r^{s(\mu,\boldsymbol{\tau})-2\varepsilon_{p_C}}.
 \end{equation}
On the other hand, if  $r\ge r_{n_C}$, i.e. $v\le 1$, then by \eqref{majorect},  one has 
$$
\eta(C)\leq \eta(R_{n_C})\leq r_{n_C}^{\alpha -2\varepsilon_{p_C}}\le r^{\alpha -2\varepsilon_{p_C}},
$$
and  \eqref{majo} holds as well, since $\alpha=f(\tau_1)\ge s(\mu,\boldsymbol{\tau})$. 
 
\medskip
\noindent
\textbf{(ii)} Suppose now that $r< r_{n_C}^{\tau_d}$ (i.e. $v>\tau_d$):
 
 \medskip
 
 Recall that $r_{n_C}^{\tau_d}$ is the length of the smallest side of the rectangle $R_{n_C}$. Since $C$ has side length less than $r_{n_C}^{\tau_d}$, and the side length of the cubes of $\mathcal{C}(R_{n_C})$ is larger than or equal to $C_d^{-1} r_{n_C}^{\tau_d}$, one deduces that $C$ intersects at most $\widetilde C_d$ of those cubes, where $\widetilde C_d$ depends on $d$ only. For all $D\in\mathcal{C}(R_{n_C})$, such that $C\cap D\neq \emptyset$, denote by $R_{k_1}^{(D)},...,R_{k_{N_{C,D}}}^{(D)}$ the rectangles included in $D$ that  intersect $C$. 
\medskip
 
$\bullet$ Suppose first that $20r\leq 2^{-p(D)} \rho_{p_C+1}$ (where $D\in\mathcal{D}_{p(D)}$): Note that for all $1 \leq i\neq j \leq N_{C,D}$, $3B_{k_i}^{(D)} \cap 3B_{k_j}^{(D)}=\emptyset  $. Also,  $C$ intersects both $B_{k_i}^{(D)}$ and $B_{k_j}^{(D)}$, and by construction, since $L_{k_i}^{(D)} \cap L_{k_j}^{(D)}=\emptyset$ and $|L_{k_j}^{(D)}|=4|B_{k_j}^{(D)}|$, we have $r\ge r_{k_j}^{(D)}$.  By  Lemma \ref{geo} applied to each pair $\{A=C,B=B_{k_j}^{(D)}\}$ and $q=3$, one gets  $\bigcup_{1\leq i\leq N_{C,D}} 3B_{k_i}^{(D)}\subset 5C$. In particular, $\bigcup_{1\leq i\leq N_{C,D}} L_{k_i}^{(D)}\subset 10C$ since $L_{k_i}^{(D)}\subset 5B_{k_i}^{(D)}$ for each $i$.  Consequently, 
$$
\sum_{1\leq i\leq N_{C,D}}\mu(L_{k_i}^{(D)})\leq \mu(10C).
$$
 Further recall that, by item \textbf{(iv)} of the recurrence scheme, for any $1\leq i\leq N_{C,D}$ the ball $L_{k_i}^{(D)}$ is centered on $E^{\varepsilon_{p_C+1}}_{D}$.  Thus there is $x\in E^{\varepsilon_{p_C+1}}_{D}\cap 10C$. Since  one has $10C\subset B(x,20r)$ and $\frac{20r}{2^{-p(D)}}\leq \rho_{p_C+1}$,  by \eqref{majoraf} we get
 \begin{equation}
 \label{mua}
 \mu(10C)  \leq \mu(B(x,20r))\leq C_{\mu}\,\mu(D)\,\Big(\frac{20r}{2^{-p(D)}}\Big)^{\alpha-\varepsilon_{p_C+1}}.
 \end{equation}
It follows from \eqref{coverc}, \eqref{etar} and \eqref{mua} that
 \begin{align*}
 \eta(C\cap D)\leq\sum_{1\leq i\leq N_{C,D}}\eta(R_{k_{i}}^{(D)})
 &\leq \eta(D)\sum_{1\leq i\leq N_{C,D}}\frac{\mu(L_{k_{i}}^{(D)})}{\sum_{1\leq j\leq N_{D}}\mu(L_{j}^{(D)})}\\
 &\leq \eta(D)\sum_{1\leq i\leq N_{C,D}}\frac{\mu(L_{k_{i}}^{(D)})}{(4Q_d)^{-1}\mu(D)}\\
 &\leq 4Q_d\, \frac{\eta(D)}{\mu(D)}\mu(10C)\\ &\leq C_{\mu}\, \eta(D)\, 4Q_d \, \Big(\frac{20r}{2^{-p(D)}}\Big)^{\alpha-\varepsilon_{p_C+1}}.
  \end{align*}
This yields
\begin{align*}
 \eta(C)&\leq \sum_{D\in\mathcal{C}(R_{n_C}):C\cap D\neq \emptyset}\eta(C\cap D)\\
 &\leq \widetilde C_d\, C_{\mu} \,\max_{D\in\mathcal{C}(R_{n_C}):C\cap D\neq \emptyset}\eta(D)\, 4Q_d \,\Big(\frac{20r}{2^{-p(D)}}\Big)^{\alpha-\varepsilon_{p_C+1}}.
 \end{align*}
Moreover by (\ref{majomesC}), for each $D\in\mathcal{C}(R_{n_C})$ such that $C\cap D\neq \emptyset$,
$$\eta(D)\leq \kappa_d \,  2^{2p(D)\varepsilon_{ p_C} }\,  2^{-s(\mu,\boldsymbol{\tau}) p(D)},$$ 
hence
 $$
 \eta(C)\leq \widetilde C_d\,   C_{\mu}\, \kappa_d \, 4Q_d \, 2^{2p(D)\varepsilon_{ p_C} }\, \Big(2^{-p(D)}\Big)^{s(\mu,\boldsymbol{\tau})}\,\Big(\frac{20r}{2^{-p(D)}}\Big)^{\alpha-\varepsilon_{p_C+1}}.
 $$
Since $C_d\, 2^{-p(D)}\ge r_{n_C}^{\tau_d}\geq r$ and the sequence $(\varepsilon_p)_{p\ge 1}$ is decreasing and bounded, it follows that for some constant $\gamma$ depending only on the dimension $d$ and $\mu$, one has 
\begin{align*}
 \eta(C)\leq \gamma \, r^{-3\varepsilon_{p_C}}\frac{r^{\alpha}}{2^{-p(D)(\alpha-s(\mu,\boldsymbol{\tau}))}}
 =\gamma \, r^{-3\varepsilon_{p_C}}\left( \frac{r}{2^{-p(D)}}\right)^{\alpha-s(\mu,\boldsymbol{\tau})}  r^{s(\mu,\boldsymbol{\tau})}.
 \end{align*}
Thus, as  $ C_d 2^{-p(D)}\geq r$ and $s(\mu,\boldsymbol{\tau})\leq \alpha$ (so that  $t>0\mapsto t^{\alpha-s(\mu,\boldsymbol{\tau})}$ is non decreasing), we finally obtain
 $$
 \eta(C)\leq \gamma  \, C_d^{\alpha-s(\mu,\boldsymbol{\tau})}\,   r^{s(\mu,\boldsymbol{\tau})-3\varepsilon_{p_C}}\le \gamma  \, C_d^{\alpha}\,   r^{s(\mu,\boldsymbol{\tau})-3\varepsilon_{p_C}}.
 $$
 \medskip
 
 $\bullet$ Suppose now that $20 \rho_{p_C +1}\, 2^{-p(D)}\leq r <r_{n_C}^{\tau_d}$: Again, by definition of $p(D)$, one has $r_{n_C}^{\tau_d}\le C_d 2^{-p(D)}$. Consequently, $C$ is covered by at most $\lfloor(C_d/20 \rho_{p_C +1})+1\rfloor ^d$  cubes of side length $20 \rho_{p_C+1}\,2^{-p(D)} $. Denoting these cubes  by $D_1,\ldots,D_{k}$, and recalling \eqref{sub}, the previous estimate yields
 \medskip
 \begin{align*}
 \ \ \ \ \ \ \ \  \eta(C)\leq  \sum_{i=1}^k\eta(D_i) &\leq \lfloor(C_d/20 \rho_{p_C +1})+1\rfloor ^d\,  \gamma\, \, C_d^{\alpha}\,\Big( 20 \rho_{p_C+1}\,2^{-p(D)}\Big)^{s(\mu,\boldsymbol{\tau})-3\varepsilon_{p_C}}\\
 &\leq \gamma_1 \rho_{p_C +1}^{-d}\, r^{s(\mu,\boldsymbol{\tau})-3\varepsilon_{p_C}}\leq \gamma_1 \, r^{s(\mu,\boldsymbol{\tau})-4\varepsilon_{p_C}}
 \end{align*}  
 for some constant $\gamma_1$ depending only on $d$ and $\mu$ (we used that $20 \rho_{p_C +1}\, 2^{-p(D)}\leq r$ to get the third inequality,  and \textbf{(iii)} as well as the inequality $\varepsilon_{p_C}\ge \varepsilon_{p_C+1}$ to get the fourth one). 
 
\medskip 

To conclude the proof, note that due to the uniform separation property outlined after the last step of the construction of $(K,\eta)$, 
$$p(r)=\inf\{p_C:\,  C\text{ is ball  of radius r included in $[0,1]^d$}\}$$
 tends to $+\infty$ as $r$  tends to $0$. 
 
 Combining the previous estimates, setting $\widetilde{\gamma}_1=\max\left\{\gamma_1 ,\gamma .C_d ^{\alpha},\kappa_d ^2 \right\}$, we finally get
$$
\eta(C)\le \widetilde{\gamma}_1 r^{s(\mu,\boldsymbol{\tau})-4\varepsilon_{p(r)}}.
$$ 
In particular, for any $p\in\mathbb{N}$, setting $r_p =\frac{1}{2}\sup\left\{r:p(r)\leq p\right\}$, it holds that for any $r\leq r_p$, any ball $C$ of radius $r$,
$$
\eta(C)\le \widetilde{\gamma}_1 r^{s(\mu,\boldsymbol{\tau})-4\varepsilon_{p}}.$$

By Lemma \ref{MD}, since $\eta(K)=1$, it holds that

$$\dim_H(K)\geq s(\mu,\boldsymbol{\tau})-4\varepsilon_{p}.$$

Letting $p\to+\infty$ proves Theorem \ref{lowerbound}.

\end{document}